\newtheorem{theorem}{Theorem}[section]
\newtheorem{lemma}[theorem]{Lemma}
\newtheorem{proposition}[theorem]{Proposition}
\newtheorem{remark}[theorem]{Remark}
\newtheorem{corollary}[theorem]{Corollary}
\newtheorem{definition}[theorem]{Definition}
\newtheorem{conjecture}[theorem]{Conjecture}
\newtheorem{claim}[theorem]{Claim}
\begin{document}

\begin{title}{Non-simply connected minimal planar domains in $\mathbb{H}^2
    \times\mathbb{R}$}
\end{title}

\begin{author}{Francisco Mart\'\i n\thanks{This research is partially
      supported by MEC-FEDER Grant no. MTM2007 - 61775 and a Regional
      J. Andaluc\'\i a Grant no. P09-FQM-5088.}  \and M. Magdalena
    Rodr\'\i guez$^*$}
\end{author}
\maketitle

\begin{abstract}
  We prove that any non-simply connected planar domain can be properly
  and minimally embedded in $\mathbb{H}^2\times\mathbb{R}$.  The examples that we
  produce are vertical bi-graphs, and they are obtained from the
  conjugate surface of a Jenkins-Serrin graph.
\end{abstract}

\section{Introduction}\label{sec:intro}
One of the most fruitful methods to obtain minimal surfaces in
$\mathbb{H}^2\times\mathbb{R}$ is by solving the Dirichlet Problem
for minimal graphs, with possibly infinite boundary values. This
method was originally introduced by H. Jenkins and Serrin \cite{JS}
for minimal graphs in $\mathbb{R}^3$, and extended to $\mathbb{H}^2
\times\mathbb{R}$ by B. Nelli and H. Rosenberg~\cite{ner2}, P.
Collin and H. Rosenberg~\cite{cor2}, and L. Mazet, H. Rosenberg and
the second author~\cite{marr1}.

In~\cite{ner2}, Nelli and Rosenberg also constructed vertical
catenoids and helicoids.  L. Hauswirth~\cite{hau1} generalized these
examples by studying all minimal surfaces foliated by horizontal
constant curvature curves. In this way, he obtained a 2-parameter
family of minimal Riemann-type surfaces, which have genus zero and
infinitely many ends.

Very recently, J. Pyo~\cite{pyo1}, F. Morabito and the second
author~\cite{moro1} have constructed minimal surfaces of genus zero
and finite total curvature. The method of construction in both
papers consists of three steps. First, one solves the Jenkins-Serrin
problem in a suitable geodesic polygonal domain with vertices $p_1,
\ldots, p_{2n}$, satisfying $p_{2 i-1}$ in $\mathbb{H}^2$ and $p_{2
i}$ in the infinite boundary of $\mathbb{H}^2$ (that we will denote
as $\partial_{\infty}\mathbb{H}^2.$) Secondly, one uses the
conjugation introduced by B. Daniel \cite{da2} and Hauswirth, R. Sa
Earp and E. Toubiana \cite{HST} to obtain a minimal graph bounded by
$n$ planar geodesics of the surface (not ambient geodesics in
$\mathbb{H}^2\times\mathbb{R}$), all of them at the same height. The
complete surface is obtained by doubling the previous graph using
Schwarz reflection principle with respect the horizontal slice that
contains the horizontal geodesics (see Figure \ref{fig:karcher-5}).
 \begin{figure}[htbp]
   \begin{center}
 \includegraphics[width=0.45\textwidth]{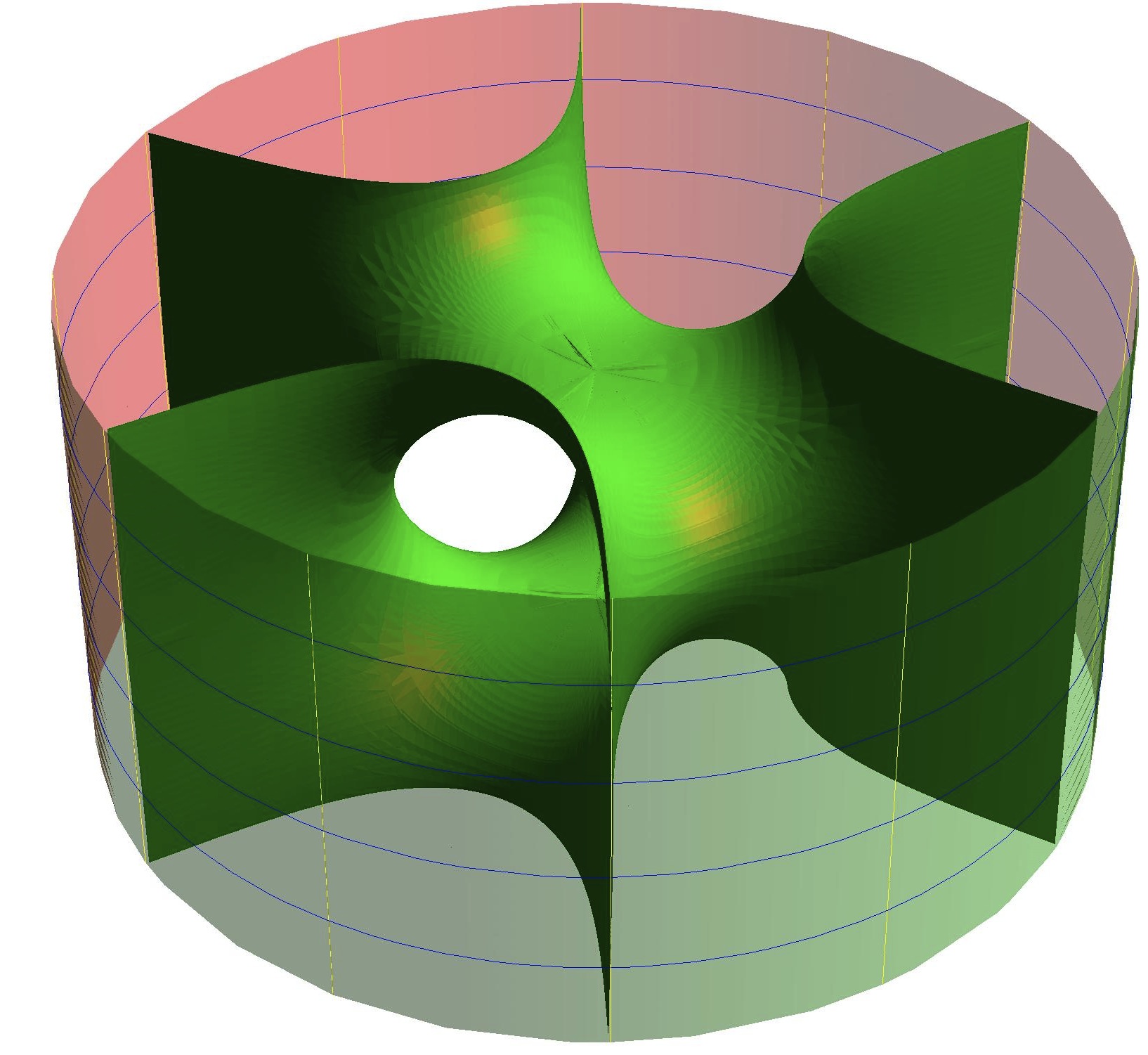}
 \end{center}
 \caption{\em \small One of the examples by Rodr\'\i guez and
   Morabito. It has the topology of a sphere minus three points.}
 \label{fig:karcher-5}
 \end{figure}

The main theorem of this paper shows that it is possible to take
limits in the method of construction described in the above
paragraph. Moreover, we have an important control of this limit
surface, in such a way we can prescribe the topology of the resulting
minimal surface. This control also allows us to guarantee that the
limit set of distinct ends are disjoint. Regarding the conformal
structure, the examples can be constructed with {\em parabolic}
conformal type. This is not rare, because in some sense the minimal
surfaces that we construct are limits of minimal surfaces with finite
total curvature.

So, the main result asserts:
\begin{quote} {\bf Theorem} \em Let $\Sigma$ be a non-simply connected
  planar domain. Then, there exists a proper minimal embedding $f
  :\Sigma \to\mathbb{H}^2 \times\mathbb{R}$. Furthermore, $f$ satisfies:
  \begin{enumerate}[(1)]
  \item $f(\Sigma)$ is a vertical bigraph symmetric with respect a
    horizontal slice.
  \item The annular ends of $f(\Sigma)$ are asymptotic to vertical
    planes.
  \item The embedding $f$ can be constructed so that for any two
    distinct ends $E_1$, $E_2$ of $\Sigma$, the limit sets $L(E_1)$,
    $L(E_2)$ in $\partial_\infty (\mathbb{H}^2 \times\mathbb{R})$ are disjoint.
  \item $f(\Sigma)$ has parabolic conformal type.
  \end{enumerate}
\end{quote}
The above theorem, which can be thought as a generalization of the
results in~\cite{R}, gives a partial answer to a more general
conjecture proposed to the authors by A. Ros:
\begin{conjecture}
  Let $M$ be an oriented open surface\footnote{ We say that a surface
    is {\bf open} if it is non-compact and without boundary.}, then
  $M$ can be properly embedded into $\mathbb{H}^2 \times\mathbb{R}$ as a minimal
  surface.
\end{conjecture}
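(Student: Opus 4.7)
The plan would be to bootstrap from the main theorem by adding handles to the planar minimal surfaces it produces. By the Kerékjártó-Richards classification, an oriented open surface $M$ is topologically determined by its genus $g \in \{0,1,2,\ldots,\infty\}$, its space of ends $\mathcal{E}(M)$, and the subset $\mathcal{E}_\infty(M) \subseteq \mathcal{E}(M)$ of ends accumulating genus. When $g=0$ and $|\mathcal{E}(M)|=1$, a horizontal slice already embeds $M$ properly and minimally; when $g=0$ and $|\mathcal{E}(M)| \geq 2$, the main theorem of the paper applies directly. The remaining, genuinely harder range is that of positive or infinite genus, where one must produce handles on top of planar topology.

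For a finite number of handles, a natural attack is a gluing construction in the spirit of Kapouleas. One starts from a vertical bigraph $f(\Sigma)$ produced by the main theorem, with $\Sigma$ an auxiliary planar domain of the correct end structure, and replaces finitely many pairs of small disks on $f(\Sigma)$ by scaled Nelli-Rosenberg catenoids of $\mathbb{H}^2 \times \mathbb{R}$ (the natural analogue of the Costa-Hoffman-Meeks neck in this ambient space). The resulting piecewise-smooth surface is minimal away from a thin annular transition region; one then perturbs it to a genuine minimal surface by solving the linearised minimal surface equation in weighted Hölder spaces and closing up via a contraction-mapping argument. The reflectional symmetry of $f(\Sigma)$ with respect to a horizontal slice is crucial here: placing the necks on this slice and imposing the symmetry on the perturbation kills a significant part of the cokernel of the Jacobi operator, and forces the added handles to inherit embeddedness from the symmetry.

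To reach arbitrary, possibly infinite, genus and arbitrary end space, one would exhaust $M$ by a nested sequence of finite-topology subsurfaces $M_n$, produce by the previous step proper minimal embeddings $f_n : M_n \to \mathbb{H}^2 \times \mathbb{R}$ whose bigraph structure and end-separation property (arranged uniformly using clause (3) of the main theorem) yield locally uniform curvature estimates, and pass to a subsequential limit via standard compactness theorems for minimal surfaces. Uniform separation of ends in $\partial_\infty(\mathbb{H}^2 \times \mathbb{R})$ would guarantee that the limit embedding is proper and has end space exactly $\mathcal{E}(M)$, while the locations at which the handles are grafted would be chosen in advance to realise the prescribed $\mathcal{E}_\infty(M)$.

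The main obstacle, by a wide margin, is controlling embeddedness, properness, and topology simultaneously through both the handle additions and the limit. Every Kapouleas-type neck is by construction on the verge of self-intersection, so the geometric parameters of the necks must be chosen with extreme care; and $\mathbb{H}^2 \times \mathbb{R}$, unlike $\mathbb{R}^3$, admits no ambient scaling, which removes the most convenient tool for balancing these parameters. Passing to an infinite-genus limit compounds the difficulties, since uniform curvature bounds for sequences of properly embedded minimal surfaces of unbounded genus and unbounded number of ends are not currently available in $\mathbb{H}^2 \times \mathbb{R}$. It is precisely these analytic difficulties that keep Ros's conjecture open beyond the planar case handled by the main theorem of this paper.
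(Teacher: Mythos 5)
The statement you were asked to prove is labelled as a \emph{Conjecture} in the paper (attributed to A.~Ros), and the paper does not prove it: the authors only establish the genus-zero case, namely that every non-simply connected planar domain embeds properly and minimally (Theorem~\ref{th:main}), and they explicitly describe this as a \emph{partial} answer to the conjecture. So there is no proof in the paper to compare yours against, and your text, read as a proof, has a genuine gap --- indeed you concede as much in your final sentence, where you state that the conjecture remains open. What you have written is a reasonable research programme, not an argument.

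Concretely, the unproven steps are the following. First, the Kapouleas-type gluing of Nelli--Rosenberg catenoidal necks onto the bigraphs $f(\Sigma)$ is only asserted: you do not verify nondegeneracy of the Jacobi operator of these bigraphs (or that the horizontal reflection symmetry kills enough of the cokernel), you do not set up the weighted spaces or the fixed-point argument, and you do not control embeddedness of the perturbed surface near the necks --- and, as you note, the absence of ambient dilations in $\mathbb{H}^2\times\mathbb{R}$ removes the standard mechanism for making the neck scale a small parameter. Second, the passage to infinite genus requires locally uniform curvature estimates for the sequence $f_n(M_n)$, which you acknowledge are not available; without them the subsequential limit could degenerate, fail to be proper, or fail to realise the prescribed end space $\mathcal{E}(M)$ and the set $\mathcal{E}_\infty(M)$ of ends accumulating genus. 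Clause~(3) of the main theorem separates limit sets of ends for a \emph{fixed} planar surface, but gives no uniformity across the sequence $f_n$. Until these points are supplied, the statement remains a conjecture, exactly as the paper presents it.
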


Furthermore, the main theorem says to us that we cannot expect
classification theorems for properly embedded minimal surfaces in
$\mathbb{H}^2 \times\mathbb{R}$ just in terms of their topology,
like in $\mathbb{R}^3$. (Meeks, P\'erez and Ros recently proved in
\cite{mpr6} that the only planar domains properly embedded in
$\mathbb{R}^3$ are the plane, the catenoid, the helicoid and
Riemann's minimal surfaces.)

\section{Preliminaries}\label{sec:prel}
We consider the Poincar\'e disk model for the hyperbolic plane, i.e.
$$
\mathbb{H}^2=\{(x,y)\in\mathbb{R}^2\ |\ x^2+y^2<1\}
$$
with the hyperbolic metric $g_{-1}=\frac{4}{(1-x^2-y^2)^2} g_0$,
where $g_0$ is the Euclidean metric in $\mathbb{R}^2$, and let ${\bf
0}=(0,0)$ be the origin of $\mathbb{H}^2$. In this model, the
asymptotic boundary $\partial_\infty\mathbb{H}^2$ of $\mathbb{H}^2$
is identified with the unit circle $\{x^2+y^2=1\}$.

\subsection{The existence of simple exhaustions} \label{sec:simple}

In this paper we will use that any open orientable surface $M$ has a
smooth compact exhaustion $M_1\subset M_2\subset \cdots M_n\subset
\cdots$, called a \emph{simple exhaustion}, with the following
properties:
\begin{enumerate}[\bf \; \; \; 1.]
\item $M_1$ is a disk.
\item For any $n \in\mathbb{N}$, each component of $M_{n+1}-\mbox{Int}(M_n)$ has
  one boundary component in $\partial M_n$ and at least one boundary
  component in $\partial M_{n+1}$.
\item For any $n \in\mathbb{N}$, $M_{n+1}-\mbox{Int}(M_n)$ contains a unique
  non-annular component which topologically is a pair of pants or an
  annulus with a handle.
\end{enumerate}
If $M$ has finite topology with genus $g$ and $k$ ends, then we call
the compact exhaustion {\em simple} if properties 1 and 2 hold,
property 3 holds for $n\leq g+k$, and when $n> g+k$, all of the
components of $M_{n+1}-\mbox{Int}(M_n)$ are annular.

The reader should note that, for any simple exhaustion of $M$, each
component of $M-\mbox{Int}(M_n)$ is a smooth, non-compact proper
subdomain of $M$ bounded by a simple closed curve and for each $n
\in\mathbb{N}$, $M_n$ is connected (see Fig.~\ref{fig:simplex}).

\begin{figure}[htbp]
  \begin{center}
    \includegraphics[width=0.5\textwidth]{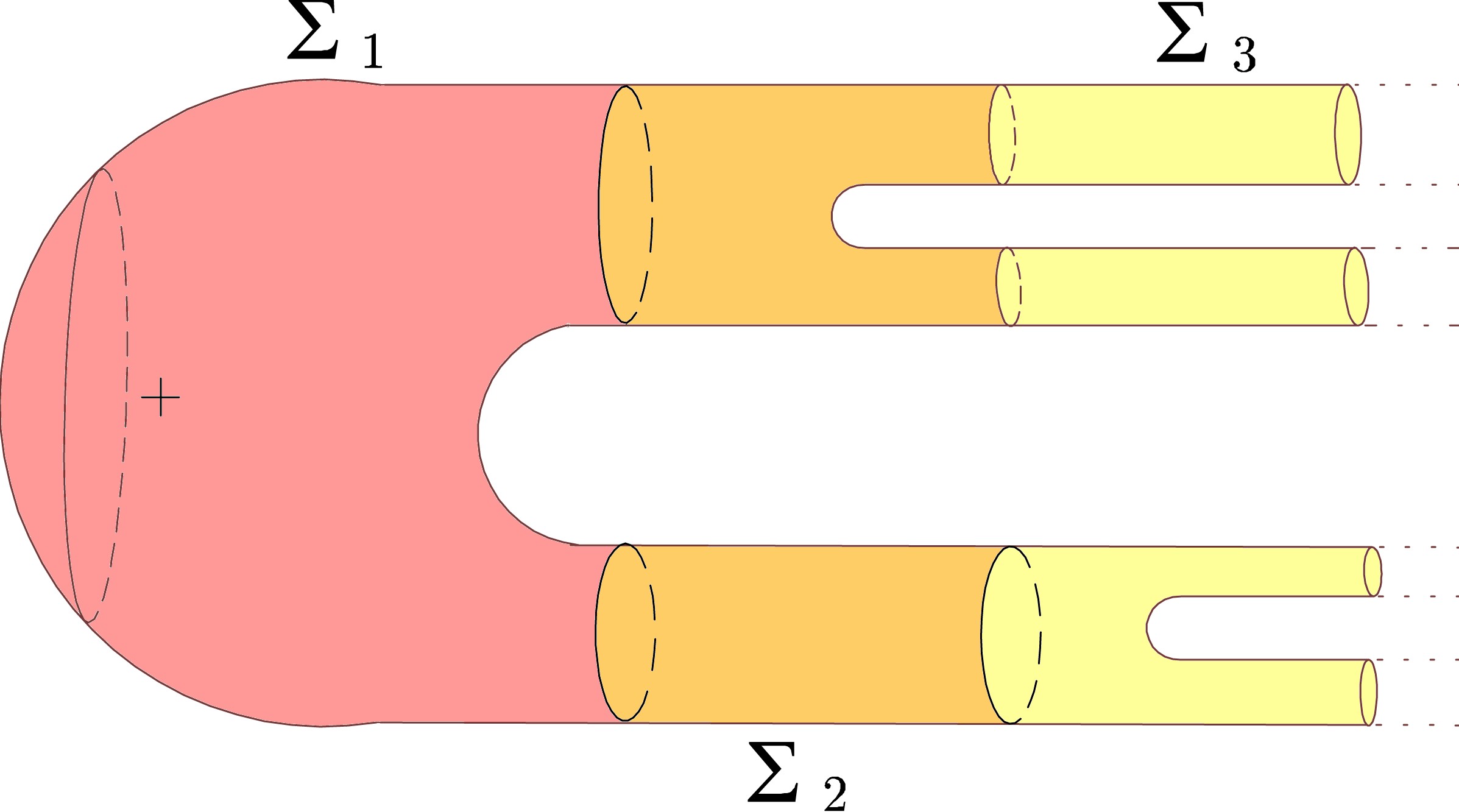}
  \end{center}
  \caption{\em \small A topological representation of the terms
    $\Sigma_1$ to $\Sigma_3$ in the exhaustion of an open surface $M$
    given in Lemma~\ref{lem:simple}.}
  \label{fig:simplex}
\end{figure}

In~\cite{fmm}, Ferrer, Meeks and the first author proved the following
result:

\begin{lemma}[\cite{fmm}] \label{lem:simple} Every orientable open
  surface admits a simple exhaustion.
\end{lemma}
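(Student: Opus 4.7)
The plan is to refine an arbitrary smooth compact exhaustion $K_1\subset K_2\subset\cdots$ of $M$ (which exists by second countability) into a simple exhaustion $\{M_n\}$, built inductively by attaching one elementary topological piece at each stage. The base is to let $M_1$ be a closed disk in the interior of $K_1$ containing a fixed basepoint; this secures condition~1.

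For the inductive step, assume $M_n$ has been constructed. Write $M\setminus\operatorname{Int}(M_n)=C_1\cup\cdots\cup C_r$, where each $C_i$ is a noncompact surface with a single boundary circle $\gamma_i\subset\partial M_n$. We define $M_{n+1}$ by selecting one \emph{active} index $i_0$ and enlarging $C_{i_0}$ by a topologically nontrivial compact piece, while enlarging each remaining $C_j$ by an annular collar. The topological input is classical: if $C_{i_0}$ has positive genus, then (by a Morse-theoretic or Ker\'ekj\'art\'o--Richards argument) there is an embedded simple closed curve $\beta\subset C_{i_0}$ such that $\gamma_{i_0}$ and $\beta$ cobound a compact genus-one surface with two boundary circles (an annulus with handle); if $C_{i_0}$ is planar with at least two ends, there are disjoint simple closed curves $\beta_1,\beta_2\subset C_{i_0}$ such that $\gamma_{i_0},\beta_1,\beta_2$ cobound a pair of pants. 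Annular ends are never eligible to be active. The selected piece, together with the annular collars on each $\gamma_j$ with $j\neq i_0$, forms $M_{n+1}\setminus\operatorname{Int}(M_n)$ and satisfies conditions~2 and~3.

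The choice of $i_0$ is made by a round-robin rule over the eligible components, updated whenever a pair-of-pants split creates new ones; additionally, each newly attached piece and each annular collar are taken large enough that $K_n\subset M_{n+f(n)}$ for some bookkeeping function $f$. This guarantees that every compact subset $L\subset M$ is absorbed in finitely many steps, so $\bigcup_n M_n=M$ is an honest exhaustion. In the finite-topology case with genus~$g$ and $k$ ends, every $C_i$ becomes an annular end after at most $g+k$ nontrivial pieces have been attached, and from that point on only annular collars can be added, yielding the finite-topology version of the statement.

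The main obstacle is the combinatorial scheduling: ensuring a unique non-annular piece at each step, preserving the connectedness of $M_n$ (which is automatic, since each attached piece is connected and glued along a single boundary circle of $M_n$), and honestly exhausting $M$. The topological existence of the cutting curves is standard surface topology; the heart of the argument is the bookkeeping that weaves these attachments together with the auxiliary exhaustion $\{K_n\}$.
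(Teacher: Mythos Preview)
The paper does not give its own proof of Lemma~\ref{lem:simple}; the result is simply quoted from~\cite{fmm} and stated without argument. There is therefore no proof in this paper against which to compare your sketch.

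On its own merits, your outline is a reasonable strategy and captures the standard idea: refine an auxiliary compact exhaustion by attaching, at each stage, exactly one pair of pants or one handled annulus to a chosen boundary circle, together with annular collars on the remaining circles. You correctly identify the combinatorial scheduling and the interleaving with $\{K_n\}$ as the crux. Two small points. First, the dichotomy you state (positive genus $\Rightarrow$ attach a handle; planar with $\ge 2$ ends $\Rightarrow$ attach pants) is not exhaustive, since a complementary component $C_{i_0}$ may simultaneously have positive genus and several ends; either move is then admissible, and your round-robin must be set up so that both the genus and the end-structure of such a component are eventually consumed. Second, the exhaustion claim ``$K_n\subset M_{n+f(n)}$'' deserves one more sentence of justification: since each $K_m\cap C_i$ is compact and hence of finite topological type relative to $\gamma_i$, finitely many elementary attachments together with a sufficiently thick collar suffice to absorb it, which is what guarantees that the bookkeeping actually terminates on each compact set.
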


A non-simply connected planar domain $\Sigma$ is a non-compact
orientable surface of genus $0$. As it has been mentioned in the
introduction, our main result is already known for minimal planar
domains with finite topology. Hence, we are going to focus on planar
domains with infinitely many ends.  In this case, Lemma
\ref{lem:simple} gives to us the following:

\begin{corollary}
  \label{co:simple}
  Let $\Sigma$ be a planar domain with an infinite number of
  ends. Then $\Sigma$ admits a compact exhaustion ${\cal S}=\{
  \Sigma_1 \subset \Sigma_2 \subset \cdots \}$, satisfying:
  \begin{enumerate}
  \item $\Sigma_1$ is a sphere minus two disks.
  \item Each component of $\Sigma_{n+1}-\mbox{Int}(\Sigma_n)$ has one
    boundary component in $\partial \Sigma_n$ and at least one
    boundary component in $\partial \Sigma_{n+1}$.
  \item $\Sigma_{n+1}-\mbox{Int}(\Sigma_n)$ contains a unique
    non-annular component which topologically is a { \bf pair of
      pants}.
  \end{enumerate}
\end{corollary}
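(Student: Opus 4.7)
The plan is to derive the corollary from Lemma~\ref{lem:simple} by exploiting the planarity hypothesis and a simple re-indexing. First I would apply Lemma~\ref{lem:simple} to $\Sigma$ to obtain a simple exhaustion $M_1\subset M_2\subset\cdots$. Since $\Sigma$ has genus $0$, so does every compact subsurface of $\Sigma$; in particular the ``annulus with a handle'' possibility in property~3 of the definition cannot occur, as such a piece has genus~$1$. Hence the unique non-annular component of every $M_{n+1}-\mbox{Int}(M_n)$ is forced to be a pair of pants.

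Next I would determine the topology of $M_2$. Because $M_1$ is a disk, its boundary consists of a single circle, and property~2 of the simple exhaustion forces every component of $M_2-\mbox{Int}(M_1)$ to touch $\partial M_1$; thus there is exactly one such component, and by the previous paragraph it is a pair of pants. Consequently $M_2$ is obtained by capping one of the three boundary circles of a pair of pants with a disk. An elementary Euler-characteristic count ($\chi=1+(-1)=0$, with $2$ remaining boundary circles and genus $0$) identifies this surface as a sphere minus two disks.

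Finally, I would set $\Sigma_n:=M_{n+1}$ for every $n\geq 1$. Condition~(1) is precisely the content of the previous paragraph. Conditions~(2) and~(3) are immediate translations of properties~2 and~3 of the simple exhaustion $\{M_n\}$ applied to $M_{n+2}-\mbox{Int}(M_{n+1})$, combined with the genus-zero reduction from the first paragraph. The argument is essentially bookkeeping, and the only conceptual point---the main ``obstacle'' in the loose sense of this exercise---is the observation that planarity removes the handle case from the Ferrer--Meeks--Mart\'\i n classification of building blocks, leaving the pair of pants as the only admissible non-annular piece.
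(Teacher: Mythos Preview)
Your proposal is correct and is exactly the argument the paper has in mind: the paper states the corollary as an immediate consequence of Lemma~\ref{lem:simple} without writing out a separate proof, and your re-indexing $\Sigma_n:=M_{n+1}$ together with the observation that planarity eliminates the annulus-with-handle case supplies precisely the routine details the paper suppresses.
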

We are also interested in the asymptotic behavior of the minimal
surfaces we are going to construct. So, we need some background about
the limit set of an end. In what follows, we will use the {\em ideal
  boundary} of $\mathbb{H}^2 \times\mathbb{R}$; $\partial_\infty (\mathbb{H}^2 \times\mathbb{R}) =
\left(\partial_\infty\mathbb{H}^2 \times\mathbb{R} \right) \cup
\left(\mathbb{H}^2\times
  \{\pm \infty\}\right).$
\begin{definition}\label{def:limit}
  Let $f\colon M \to\mathbb{H}^2 \times\mathbb{R}$ be a proper embedding of a
  surface $M$ with possibly non-empty boundary. The {\bf limit set} of
  $M$ is $$L(M)=\bigcap_{\alpha\in I}(\overline{f(M) - f(C_\alpha)}),$$ where
  $\{ C_{\alpha}\}_{\alpha \in I}$ is the collection of compact subdomains of
  $M$ and the closure $\overline{f(M) - f(C_\alpha)}$ is taken in
  $\partial_\infty(\mathbb{H}^2 \times\mathbb{R})$. The {\bf limit set $L(E)$ of an
    end $E$ of $M$} is defined to be the intersection of the limit
  sets of all properly embedded subdomains of $M$ with compact
  boundary which represent $E$. Notice that $L(M)$ and $L(E)$ are
  closed sets of $\partial_\infty(\mathbb{H}^2 \times\mathbb{R})$.
\end{definition}

\subsection{Minimal graphs}
Given an open domain $\Omega \subset {\mathbb H}^2 $ and a smooth
function $u:\Omega \to\mathbb{R}$, the graph surface of $u$ is
minimal in $\mathbb{H}^2\times\mathbb{R}$ when
\begin{equation}
  \label{eq.min.surf}
  {\rm div}\left( \frac {\nabla u}
    {\sqrt{1+|\nabla u|^2}} \right)=0,
\end{equation}
where all terms are calculated with respect to the metric of
$\mathbb{H}^2$.

\begin{definition}
  We say that a domain $\Omega\subset\mathbb{H}^2$ is {\it polygonal} when it
  is bounded by geodesic arcs. A polygonal domain $\Omega\subset\mathbb{H}^2$
  with a finite number of vertices (possibly at the infinite boundary
  $\partial_\infty\mathbb{H}^2$ of $\mathbb{H}^2$) is said to be {\it semi-ideal} when
  no two consecutive vertices are ideal (i.e. they are at
  $\partial_\infty\mathbb{H}^2$) nor interior (i.e. they lie in $\mathbb{H}^2$).
\end{definition}

Let $\Omega$ be a semi-ideal domain. In particular, $\Omega$ has an
even number of vertices $p_1,\ldots,p_{2k}$ (cyclically ordered),
with $p_{2i-1}\in\partial_\infty\mathbb{H}^2$ and
$p_{2i}\in\mathbb{H}^2$, for any $i=1,\ldots,k$. We call $A_i$
(resp. $B_i$) the geodesic arc joining $p_{2i-1},p_{2i}$ (resp.
$p_{2i},p_{2i+1}$); i.e.
\[
A_i=(p_{2i-1},p_{2i})_{\mathbb{H}^2}, \qquad
B_i=(p_{2i},p_{2i+1})_{\mathbb{H}^2} .
\]
We consider a horocycle $H_{2i-1}$ at each ideal vertex $p_{2i-1}$.
Assume $H_{2i-1}\cap H_{2j-1}=\emptyset$ for any $i\neq j$.  Given a
polygonal domain $\mathcal{P}$ inscribed in $\Omega$ (i.e. a
polygonal domain $\mathcal{P}\subset\Omega$ whose vertices are
vertices of $\Omega$, possibly at $\partial_\infty\mathbb{H}^2$), we
denote by $\Gamma(\mathcal{P})$ the part of $\partial \mathcal{P}$
outside the horocycles. (Observe that $\Gamma(\mathcal{P})=\partial
\mathcal{P}$ in the case all the vertices of $\mathcal{P}$ are in
$\mathbb{H}^2$.) Also let us call
\[
\alpha(\mathcal{P})=\sum_{i=1}^k\left|A_i\cap\Gamma(\mathcal{P})\right|
\qquad {\rm and}\qquad
\beta(\mathcal{P})=\sum_{i=1}^k\left|B_i\cap\Gamma(\mathcal{P})\right|
,
\]
where $|\bullet|=\mbox{length}_\mathbb{H}^2(\bullet)$.

\begin{definition}
  \label{Def:JS}
  {\rm A domain $\Omega\subset\mathbb{H}^2$ is called {\it admissible} when:
    \begin{enumerate}
    \item It is a convex semi-ideal polygonal domain with vertices
      $p_1,\ldots,p_{2k}$, with $p_{2i-1}\in\partial_\infty\mathbb{H}^2$ and
      $p_{2i}\in\mathbb{H}^2$.
    \item There exists a choice of disjoint horocycles $H_{2i-1}$ at
      the ideal vertices $p_{2i-1}$ such that:
      \begin{itemize}
      \item[(i)]
        $\mbox{dist}_{\mathbb{H}^2}(p_{2i-2},H_{2i-1})=\mbox{dist}_{\mathbb{H}^2}(p_{2i},H_{2i-1})$.
      \item[(ii)] $2\alpha(\mathcal{P})<\left|\Gamma(\mathcal{P})\right|$
        and $2\beta(\mathcal{P})<\left|\Gamma(\mathcal{P})\right|$, for
        every polygonal domain $\mathcal{P}$ inscribed in~$\Omega$,
        $\mathcal{P}\neq\Omega$.
      \end{itemize}
    \end{enumerate}
    Up to an isometry of $\mathbb{H}^2$, we can assume that the origin ${\bf
      0}=(0,0)$ is contained in $\Omega$.
    We say that $(\Omega,u)$ is an {\it admissible pair} if $\Omega$
    is an admissible domain and $u:\Omega\to\mathbb{R}$ is a solution to the
    minimal graph equation~\eqref{eq.min.surf} with $u({\bf 0})=0$ and
    whose boundary values are $+\infty$ on each edge $A_i$ and
    $-\infty$ on each $B_i$.  }
\end{definition}

We remark that condition (i) in the above definition does not depend
on the choice of horocycles; and if the inequalities of condition (ii)
are satisfied for some choice of horocycles, then they continue to
hold for ``smaller'' horocycles (see the argument given by Collin and
Rosenberg in~\cite{cor2}).

The following lemma is very useful to know when a domain satisfying
conditions 1 and 2-(i) in the above definition is admissible. We will
use this characterization in the proof of Lemma~\ref{lem:main}.

\begin{lemma}[\cite{R}]\label{lem:JS}
  Let $\Omega$ be a convex semi-ideal polygonal domain with vertices
  $p_1,\ldots,p_{2k}$, with $p_{2i-1}\in\partial_\infty\mathbb{H}^2$ and
  $p_{2i}\in\mathbb{H}^2$. Suppose there exists a choice of disjoint
  horocycles $H_{2i-1}$ at the ideal vertices $p_{2i-1}$ such that
  $\mbox{dist}_{\mathbb{H}^2}(p_{2i-2},H_{2i-1})=\mbox{dist}_{\mathbb{H}^2}(p_{2i},H_{2i-1})$. Then
  $\Omega$ is admissible if, and only if,
  $p_{2j}\in\mathbb{H}^2-\overline{D_{2i-1}}$ for any $i\neq j,j+1$, where
  $D_{2i-1}$ is the horodisk at $p_{2i-1}$ passing through $p_{2i-2}$
  and $p_{2i}$.
\end{lemma}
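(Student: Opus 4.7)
The plan is to reformulate condition~(ii) as a single inequality $|\alpha(\mathcal{P})-\beta(\mathcal{P})|<\delta(\mathcal{P})$, localize both sides at the ideal vertices of $\mathcal{P}$ using hypothesis~(i), and test the bound against an explicit one-parameter family of inscribed polygons that isolate a single ideal vertex of $\Omega$.

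Set $\delta(\mathcal{P})$ to be the sum of the lengths, outside the horocycles, of those edges of $\partial\mathcal{P}$ which are not edges of $\Omega$ (the ``diagonals''). Then $|\Gamma(\mathcal{P})|=\alpha(\mathcal{P})+\beta(\mathcal{P})+\delta(\mathcal{P})$ and condition~(ii) becomes $|\alpha(\mathcal{P})-\beta(\mathcal{P})|<\delta(\mathcal{P})$ for every inscribed $\mathcal{P}\subsetneq\Omega$. A direct edge-by-edge count, using hypothesis~(i) in the form $\mbox{dist}_{\mathbb{H}^2}(p_{2i},H_{2i-1})=\mbox{dist}_{\mathbb{H}^2}(p_{2i-2},H_{2i-1})$, gives the identity
\[
\alpha(\mathcal{P})-\beta(\mathcal{P})=\sum_{p_{2i-1}\in V(\mathcal{P})}\bigl[\mathbf{1}_{p_{2i}\in V(\mathcal{P})}-\mathbf{1}_{p_{2i-2}\in V(\mathcal{P})}\bigr]\,\mbox{dist}_{\mathbb{H}^2}(p_{2i-2},H_{2i-1}),
\]
so only the ideal vertices of $\mathcal{P}$ that retain exactly one of their two $\Omega$-neighbors contribute.

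\emph{Necessity.} Suppose $p_{2j}\in\overline{D_{2i-1}}$ with $j\ne i,i-1$, and let $\mathcal{P}$ be the inscribed polygon obtained from $\Omega$ by deleting the cyclic arc $p_{2i},p_{2i+1},\ldots,p_{2j-1}$. Its boundary contains exactly one diagonal, $[p_{2i-1},p_{2j}]$, and $p_{2i-1}$ is the only ideal vertex of $V(\mathcal{P})$ with a missing $\Omega$-neighbor, so
\[
\alpha(\mathcal{P})-\beta(\mathcal{P})=-\mbox{dist}_{\mathbb{H}^2}(p_{2i-2},H_{2i-1}),\qquad\delta(\mathcal{P})=\mbox{dist}_{\mathbb{H}^2}(p_{2j},H_{2i-1}).
\]
The difference $\mbox{dist}_{\mathbb{H}^2}(p_{2j},H_{2i-1})-\mbox{dist}_{\mathbb{H}^2}(p_{2i-2},H_{2i-1})$ is the value at $p_{2j}$ of the Busemann function at $p_{2i-1}$ normalized to vanish on $\partial D_{2i-1}$ (and, by (i), at both $p_{2i-2}$ and $p_{2i}$), and is $\le 0$ precisely when $p_{2j}\in\overline{D_{2i-1}}$. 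Hence $\beta(\mathcal{P})-\alpha(\mathcal{P})\ge\delta(\mathcal{P})$, contradicting condition~(ii).

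\emph{Sufficiency.} Assume $p_{2j}\notin\overline{D_{2i-1}}$ for every $i\ne j,j+1$, and let $\mathcal{P}\subsetneq\Omega$ be inscribed. For each ideal vertex $p_{2i-1}\in V(\mathcal{P})$ with, say, $p_{2i-2}\notin V(\mathcal{P})$ but $p_{2i}\in V(\mathcal{P})$ (the opposite case being symmetric), the edge of $\partial\mathcal{P}$ adjacent to $p_{2i-1}$ on the $p_{2i-2}$-side is a diagonal $[p_{2i-1},q]$ with $q\ne p_{2i-2}$; the horodisk hypothesis, read through the Busemann function at $p_{2i-1}$, gives that $q$ is strictly farther from $p_{2i-1}$ than $p_{2i-2}$ is, so the portion of this diagonal near $p_{2i-1}$ contributes strictly more to $\delta(\mathcal{P})$ than the missing $\Omega$-edge would have contributed to $\beta(\mathcal{P})$. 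Summing these strict local excesses across all mismatched ideal vertices of $\mathcal{P}$, and adding the positive lengths of any remaining diagonal portions, should yield $\delta(\mathcal{P})>|\alpha(\mathcal{P})-\beta(\mathcal{P})|$.

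The hardest part will be the bookkeeping in the sufficiency direction when a single diagonal of $\mathcal{P}$ is incident to two ideal vertices of $\mathcal{P}$ simultaneously, so that its length must compensate for two distinct local excesses. One handles this by splitting the diagonal at an interior point chosen via the Busemann functions at the two ideal endpoints and verifying that the two strict horodisk inequalities combine correctly, shrinking the horocycles if necessary (the observation from~\cite{cor2}, already cited above, that the inequalities of (ii) persist under horocycle shrinkage is crucial here). The cleanest approach is probably an induction on $|V(\Omega)\setminus V(\mathcal{P})|$: the base case (a single omitted vertex) is precisely the polygon used in necessity, and the inductive step reinserts one omitted vertex and invokes the hyperbolic triangle inequality to compare the one diagonal with the two diagonals that appear after the insertion.
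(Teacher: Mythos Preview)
The paper does not prove Lemma~\ref{lem:JS}; it is stated with the citation~\cite{R} and used as a black box. There is therefore no proof in this paper to compare your attempt against.

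On the merits of your attempt itself: the reformulation $|\alpha(\mathcal{P})-\beta(\mathcal{P})|<\delta(\mathcal{P})$ and the identity for $\alpha(\mathcal{P})-\beta(\mathcal{P})$ via hypothesis~(i) are correct and constitute the right framework. Your necessity argument is essentially complete and correct. The sufficiency direction, however, is only a plan. You yourself flag the diagonal-with-two-ideal-endpoints case as the hard one, and you do not actually carry out the induction you propose; moreover, the claim that ``the base case (a single omitted vertex) is precisely the polygon used in necessity'' is not right---your necessity polygon omits the entire arc $p_{2i},p_{2i+1},\ldots,p_{2j-1}$, which is at least two vertices. The inductive scheme can likely be made to work, but as written the sufficiency half is a sketch rather than a proof. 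If you want to complete it, the cleanest route is probably not induction on omitted vertices but rather a direct decomposition of $\delta(\mathcal{P})$: for each diagonal, assign its truncated length to the ideal vertices at its endpoints via the Busemann functions, and use the horodisk hypothesis to bound each assigned piece strictly below by the corresponding $d_{2i-1}$ appearing in your identity for $\alpha-\beta$.
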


The following theorem says that, given an admissible domain, it
exists a unique solution $u:\Omega\to\mathbb{R}$ to the minimal
graph equation~\eqref{eq.min.surf} on $\Omega$ such that
$(\Omega,u)$ is an admissible pair.

\begin{theorem}[\cite{cor2,marr1,moro1}]
  \label{th:JS} Let $\Omega$ be an admissible domain with edges $A_1,
  B_1,\ldots,A_k, B_k$ (cyclically ordered).  Then there exists a
  solution $u$ for the minimal graph equation~\eqref{eq.min.surf} in
  $\Omega$ with boundary values
  \[
  u|_{A_i}=+\infty\quad\mbox{and}\quad u|_{B_i}=-\infty, \quad
  \mbox{for any }\ i=1,\ldots, k.
  \]
  This solution is unique up to an additive constant.

  Moreover, if we denote by $\Sigma^*$ the conjugate surface, then
  $\Sigma^*$ is a graph of a function $u^*$ over an ideal domain
  $\Omega^*$ with
  \[
  \partial \Omega^*=\gamma_1^* \cup \delta_1^* \cup \ldots \cup
  \gamma_k^* \cup \delta_k^*, \quad \mbox{(cyclically ordered),}
  \]
  where:
  \begin{enumerate}
  \item $\delta_1^*, \ldots, \delta_k^*$ are concave curves, with
    respect to $\Omega^*$,
  \item $u^*|_{\delta_i^*} =0$, for $i=1 , \ldots, k$,
  \item $\gamma_1^*, \ldots, \gamma_k^*$ are geodesics and
    $u^*|_{\gamma_i^*}=+\infty$, for any $i=1, \ldots,k,$
  \item $\delta_i^*$ is a horizontal geodesic curvature line of
    symmetry of $\Sigma^*$, for $i=1, \ldots,k$,
  \item $\delta^*_i$ and $\gamma^*_i$ (resp. $\delta^*_i$ and
    $\gamma_{i+1}^*$) are asymptotic at their common endpoint at
    $\partial_\infty\mathbb{H}^2.$
  \end{enumerate}
\end{theorem}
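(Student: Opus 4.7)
The statement factors into two largely independent pieces: Jenkins--Serrin existence and uniqueness of the graph $u$, and the geometric analysis of its sister surface $\Sigma^*$.

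For the first piece, the plan is to follow the classical Jenkins--Serrin strategy adapted to $\mathbb{H}^2\times\mathbb{R}$. I would exhaust the ideal polygon $\Omega$ by a monotone family of compact polygonal domains $\Omega_n\subset\Omega$ obtained by truncating the ideal vertices $p_{2i-1}$ with a decreasing family of horocycles, and solve on each $\Omega_n$ the classical Dirichlet problem with boundary values $n$ on the truncated $A_i$, $-n$ on the truncated $B_i$, and any continuous interpolation on the short horocyclic arcs. Normalizing $u_n({\bf 0})=0$, the sequence $u_n$ will converge to the desired $u$ provided we exclude uniform divergence on compact subsets. For this I would use the flux of the conjugate one-form of $X_u=\nabla u/\sqrt{1+|\nabla u|^2}$: its integral along any arc $\gamma$ lies in $[-|\gamma|,|\gamma|]$, with $\pm|\gamma|$ attained precisely when $u\to\pm\infty$ on $\gamma$. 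The admissibility condition 2-(ii), namely $2\alpha(\mathcal P)<|\Gamma(\mathcal P)|$ and $2\beta(\mathcal P)<|\Gamma(\mathcal P)|$ over every inscribed polygon $\mathcal P$, is then exactly the strict inequality ruling out a divergence set inside $\Omega$, while condition 2-(i) ensures that the horocyclic truncation contributes zero flux in the limit. Uniqueness up to an additive constant follows from a standard maximum principle plus flux comparison between two hypothetical solutions.

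For the second piece, I would apply the Daniel / Hauswirth--Sa Earp--Toubiana sister correspondence to the simply connected minimal graph $\Sigma$. This gives an isometric minimal immersion $\Sigma^*$ in $\mathbb{H}^2\times\mathbb{R}$ which preserves the angle function with the vertical direction, rotates the shape operator by $\pi/2$, and in particular sends every vertical straight segment of $\Sigma$ to a horizontal curve of $\Sigma^*$ lying in a single horizontal slice, along which $\Sigma^*$ is invariant under Schwarz reflection. Applying this to the complete vertical line over each finite vertex $p_{2i}$ produces, after a vertical translation fixing the slice $\{t=0\}$, the symmetry curve $\delta_i^*$ with $u^*|_{\delta_i^*}=0$, yielding (1) and (4). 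That $\Sigma^*$ is globally a graph over its $\mathbb{H}^2$-projection $\Omega^*$ follows because the angle function with the vertical is transported unchanged from $\Sigma$ and hence stays positive; the $A_i, B_i$ sides of $\Omega$ on which $u\to\pm\infty$ then conjugate into boundary arcs $\gamma_i^*$ of $\Omega^*$ on which $u^*\to+\infty$, and these arcs are geodesics because the asymptotic vertical half-plane of $\Sigma$ over $A_i$ or $B_i$ conjugates into an asymptotic vertical half-plane over a geodesic of $\mathbb{H}^2$, giving (2) and (3).

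The main obstacle is property (5): matching the asymptotic endpoints of $\delta_i^*$, $\gamma_i^*$, $\gamma_{i+1}^*$ at $\partial_\infty\mathbb{H}^2$. Since $\delta_i^*$ is the conjugate image of the complete vertical line over $p_{2i}$, its two ideal endpoints encode the asymptotic behavior of $u$ along $A_i$ and $B_i$ as one approaches the ideal vertices $p_{2i-1}$ and $p_{2i+1}$. My plan here is to compare $u$, inside a small horoball at each ideal vertex $p_{2i-1}$, with the explicit translation-invariant Scherk-type barriers in a horocyclic strip; this pins down the blow-up rate of $u$ along $A_i$ and $B_{i-1}$ and, via the conjugation, the direction along which the corresponding ends of $\delta_i^*$ and $\delta_{i-1}^*$ tend to $\partial_\infty\mathbb{H}^2$. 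The horocycle equidistance condition 2-(i) is precisely what makes the contributions from $A_i$ and $B_{i-1}$ cancel in the resulting period-at-infinity computation, so that $\delta_i^*$, $\delta_{i-1}^*$, and the intervening geodesic $\gamma_i^*$ all share the same ideal endpoint rather than being offset by a nontrivial translation along $\partial_\infty\mathbb{H}^2$; this is the core asymptotic computation already carried out in~\cite{cor2,marr1,moro1}.
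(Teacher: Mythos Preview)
The paper does not itself prove Theorem~\ref{th:JS}; it is quoted as a known result from \cite{cor2,marr1,moro1}, and the surrounding subsections only record the flux lemmas, the divergence-line machinery, and the Krust-type theorem that those references use. Your outline of the Jenkins--Serrin part (truncate by horocycles, solve with data $\pm n$, normalize, exclude divergence lines via the flux inequalities encoded in condition~2-(ii)) is exactly the strategy of \cite{cor2,marr1}, and your description of what the sister correspondence does to the vertical lines over the interior vertices is correct.

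There is, however, a real gap in your argument that $\Sigma^*$ is a graph. Saying that the angle function $\langle N^*,\partial_t\rangle=\langle N,\partial_t\rangle$ stays positive only gives that the vertical projection $\Sigma^*\to\mathbb{H}^2$ is a local diffeomorphism; it does not prevent the image from being multiply covered. The step that upgrades this to a global graph is the Krust-type theorem of Hauswirth--Sa~Earp--Toubiana (stated in the paper as Theorem~\ref{th:krust}), and it requires the \emph{convexity} of the original domain~$\Omega$. This is why admissible domains are assumed convex in Definition~\ref{Def:JS}; your proposal never invokes convexity on the conjugate side, so as written the embeddedness of $\Sigma^*$ is unjustified.

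A second, smaller issue: you dispose of item~(2) by ``a vertical translation fixing the slice $\{t=0\}$'', but there are $k$ curves $\delta_1^*,\ldots,\delta_k^*$ and a single translation can place only one of them at height~$0$. That \emph{all} of them lie at the same height is a period condition on the conjugate height $h^*$: one must show that $\int_{\ell_i}^{\ell_j}\!*dh=0$ along a path in $\Sigma$ joining two vertical boundary lines, and this is exactly where the equidistance condition~2-(i) enters (it makes the horocyclic contributions near consecutive ideal vertices cancel). You attribute~2-(i) instead to the asymptotic matching in item~(5), but the genuine role of~2-(i) in \cite{moro1} is to kill this vertical period so that the reflected double of $\Sigma^*$ is a well-defined embedded surface; item~(5) then follows from the end behaviour once the graph property and item~(2) are in hand.
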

 \begin{figure}[htbp]
   \begin{center}
     \includegraphics[width=0.4\textwidth]{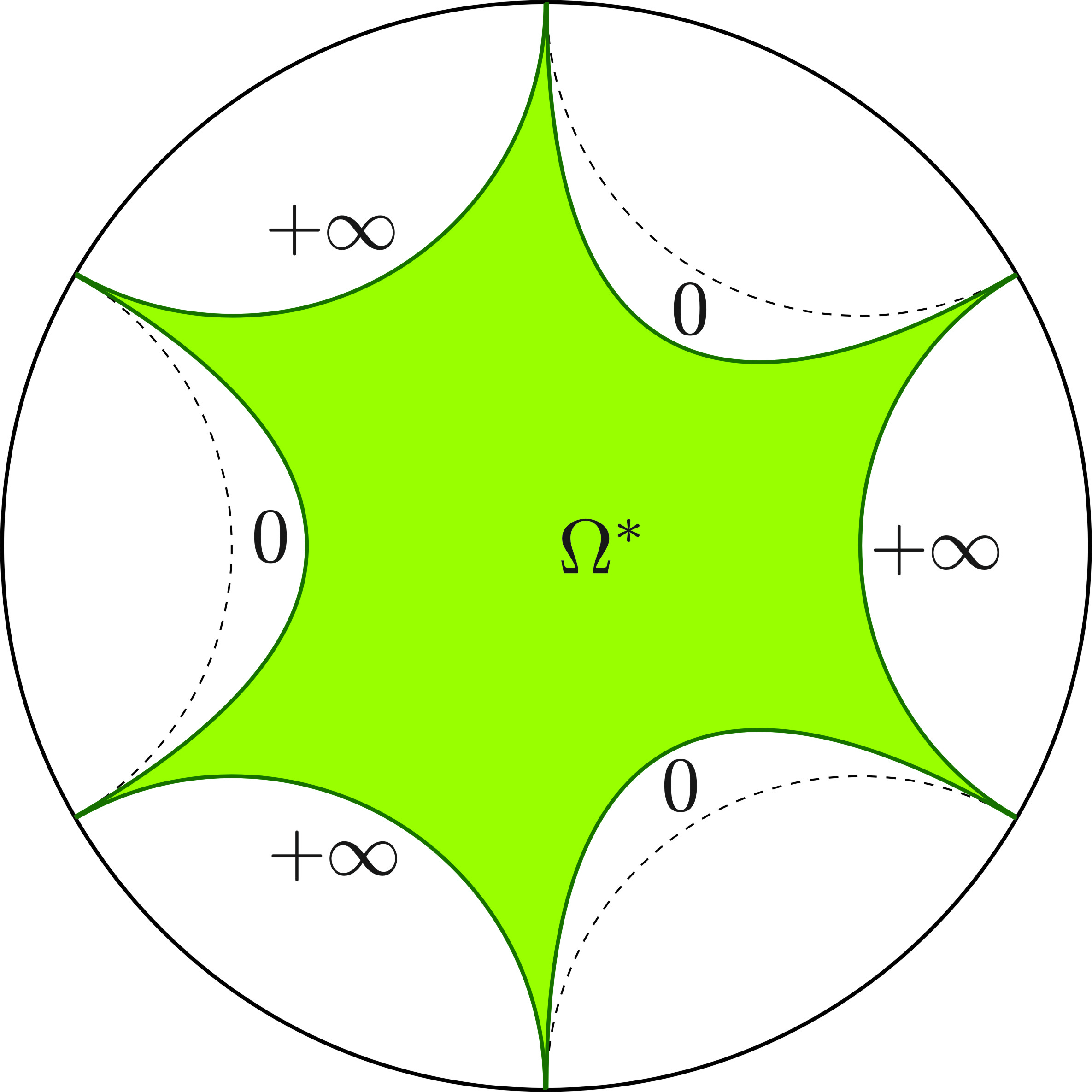}
     \hfill \includegraphics[width=0.5\textwidth]{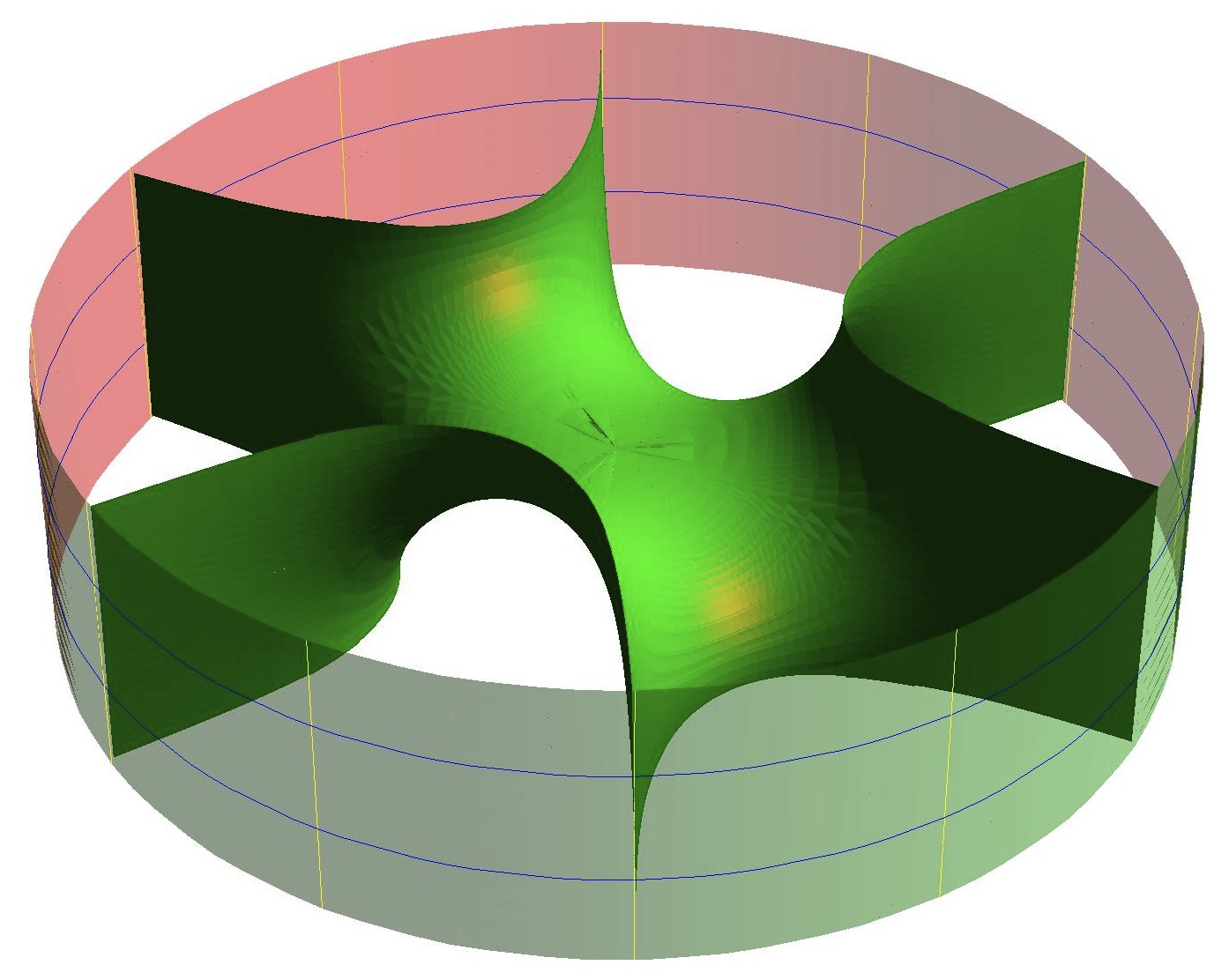}
   \end{center}
   \caption{\em \small Left: The domain $\Omega^*$. Right: The
     conjugate graph $\Sigma^*$.}
   \label{fig:karcher-1}
 \end{figure}

In the following subsections we present some useful tools used in the
proof of Theorem~\ref{th:JS}, which will also been used along the
present paper.

\subsubsection{Flux of a minimal graph along a curve}
Let $u$ be a minimal graph defined on a domain
$\Omega\subset\mathbb{H}^2$. Assume $\partial \Omega$ is piecewise
smooth and $u$ extends continuously to $\overline{\Omega}$ (possibly
with infinite values). We define the {\it flux} of $u$ along a curve
$\Gamma\subset\partial \Omega$ as
\[
F_u(\Gamma)= \int_\Gamma\left\langle\frac{\nabla u}{\sqrt{1+|\nabla
      u|^2}}, \eta\right\rangle ds,
\]
where $\eta$ is the outer normal to $\partial\Omega$ in
$\mathbb{H}^2$ and $ds$ is the arc-length of~$\partial\Omega$.

In the case $\Gamma\subset\Omega$, we can see $\Gamma$ in the
boundary of different subdomains of $\Omega$, with two possible
induced orientations. The flux $F_u(\Gamma)$ of $u$ along $\Gamma$
is then well-defined up to sign, and $|F_u(\Gamma)|$ is
well-defined.

\begin{lemma}[\cite{ner2}]\label{lem:flux}
  Let $u$ be a minimal graph on a domain $\Omega\subset\mathbb{H}^2$.
  \begin{itemize}
  \item[(i)] For every subdomain $\Omega'\subset\Omega$ such that
    $\overline{\Omega'}$ is compact, we have $F_u(\partial\Omega')=0$.
  \item[(ii)] Let $\Gamma$ be a piecewise smooth curve contained in
    the interior of $\Omega$, or a convex curve in $\partial\Omega$
    where $u$ extends continuously and takes finite values.  Then
    $|F_u(\Gamma)|<|\Gamma|$.
  \item[(iii)] If $T\subset\partial\Omega$ is a geodesic arc such that
    $u$ diverges to $+\infty$ (resp. $-\infty$) as one approaches $T$
    within $\Omega$, then $F_u(T)=|T|$ (resp. $F_u(T)=-|T|$).
  \end{itemize}
\end{lemma}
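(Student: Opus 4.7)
The plan is to exploit the fact that for any minimal graph $u$, the horizontal vector field $X = \nabla u/\sqrt{1+|\nabla u|^2}$ is divergence-free and satisfies the pointwise bound $|X| < 1$. All three parts will follow by analyzing boundary integrals of $X$ via the divergence theorem, with the only real subtlety being part (iii), where $u = +\infty$ on $T$.

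Part (i) is immediate: equation (\ref{eq.min.surf}) reads $\mathrm{div}(X) = 0$, so for any $\Omega' \Subset \Omega$ with piecewise smooth boundary, the divergence theorem gives $F_u(\partial\Omega') = \int_{\Omega'} \mathrm{div}(X)\, dA = 0$. For (ii), observe that $|X|^2 = |\nabla u|^2/(1+|\nabla u|^2) < 1$ pointwise, hence $|\langle X,\eta\rangle| < 1$ along $\Gamma$; on a compact curve in the interior of $\Omega$, continuity of $|X|$ provides a uniform bound $|X| \leq c < 1$, while on a convex boundary arc where $u$ is continuous with finite values, standard boundary regularity for the minimal surface equation (gradient bounds up to such an arc) extends the estimate to the closure of $\Gamma$. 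In either case $|F_u(\Gamma)| \leq c|\Gamma| < |\Gamma|$.

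The main obstacle is (iii), which demands a limiting argument since the integrand is formally undefined on $T$. Let $T_t \subset \Omega$ denote the equidistant curve to $T$ at distance $t$, meeting $\partial\Omega$ in two points linked to the endpoints $p,q$ of $T$ by short arcs $\sigma_p^t, \sigma_q^t$; let $S_t$ be the resulting strip between $T$ and $T_t$. Since $u \to +\infty$ on $T$, for each large $N$ the set $S_t \cap \{u < N\}$ has boundary consisting of $T_t$, pieces of $\sigma_p^t, \sigma_q^t$, and a level-set arc $\{u = N\} \cap S_t$, and (i) applies to this subdomain, yielding a flux identity. Along the level-set arc the outer normal is parallel to $\nabla u$, so $\langle X,\eta\rangle = |\nabla u|/\sqrt{1+|\nabla u|^2} \to 1$ uniformly as $N \to \infty$, while the length of the arc tends to $|T|$ because the level sets accumulate on the geodesic $T$. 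Passing to the limit $N \to \infty$ and then $t \to 0$ yields $F_u(T) = |T|$: the corner contributions vanish by (ii) as $|\sigma_\bullet^t| \to 0$, and $|T_t| \to |T|$ by smoothness of the equidistant foliation. The technical heart of the argument, and the main obstacle, is the uniform convergence $\langle X,\eta\rangle \to 1$ on level sets approaching $T$; this reflects the fact that the tangent planes of the graph become vertical along an edge carrying infinite boundary data, a standard feature of Jenkins--Serrin solutions but requiring careful justification. The case $u = -\infty$ on $T$ follows by the same reasoning with a sign flip.
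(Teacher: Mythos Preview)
The paper does not give a proof of this lemma; it is quoted from Nelli--Rosenberg~\cite{ner2} as a known tool, so there is no in-paper argument to compare against. Your sketch is essentially the classical Jenkins--Serrin argument transported to $\mathbb{H}^2\times\mathbb{R}$: the divergence theorem for (i), the pointwise bound $|X|<1$ together with boundary gradient estimates for (ii), and a limiting/truncation argument for (iii). This is the standard route and is correct in outline.

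Two minor points on (iii). First, you invoke (ii) to kill the corner contributions along $\sigma_p^t,\sigma_q^t$, but those arcs lie on $\partial\Omega$ and may themselves carry infinite boundary data (adjacent Jenkins--Serrin edges), in which case (ii) does not apply; the remedy is trivial, since the crude bound $|F_u(\sigma)|\le|\sigma|$ holds regardless and $|\sigma_\bullet^t|\to 0$. Second, the assertion that the level-set arc has length tending to $|T|$ is precisely the delicate step you flag, and in the literature it is usually packaged instead as the statement that $X=\nabla u/\sqrt{1+|\nabla u|^2}$ extends continuously to the open arc $T$ with value equal to the outer conormal $\eta$; once that is established, $F_u(T)=\int_T\langle X,\eta\rangle\,ds=|T|$ follows directly, without routing through level sets. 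Your formulation and this one are equivalent, but the latter is how~\cite{ner2} and its Euclidean predecessors organize the argument.
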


\begin{lemma}[\cite{marr1}]\label{lem:inf_flux}
  Let $u$ be a minimal graph on a domain $\Omega\subset\mathbb{H}^2$, and
  $T\subset\partial\Omega$ such that $|F_u(T)|=|T|$
  (resp. $|F_u(T)|=-|T|$). Then $u$ goes to $+\infty$
  (resp. $-\infty$) as we approach $T$ within $\Omega$.
\end{lemma}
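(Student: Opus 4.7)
My plan is to prove this by contradiction, turning Lemma~\ref{lem:flux} on its head: the strict inequality in part (ii) of that lemma will be used to contradict the maximal flux hypothesis. I treat the case $F_u(T)=|T|$; the other case is symmetric. Since $u$ extends continuously to $\overline\Omega$ with values in $[-\infty,+\infty]$, its restriction $\bar u$ to $T$ is continuous, so both $\{\bar u=+\infty\}$ and $\{\bar u=-\infty\}$ are closed in $T$ while $\{|\bar u|<+\infty\}$ is open. Suppose for contradiction that $u$ does not diverge to $+\infty$ at every point of $T$; then $\{\bar u=+\infty\}$ is a proper closed subset, so its complement is a non-empty open subset of $T$ and contains a compact sub-arc $T_0$ of positive length on which either (a) $\bar u$ is finite, or (b) $\bar u\equiv -\infty$.

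In case (a), $T_0$ is a convex sub-arc of the geodesic arc $T\subset\partial\Omega$ on which $u$ extends continuously with finite values, so Lemma~\ref{lem:flux}(ii) yields the \emph{strict} inequality $|F_u(T_0)|<|T_0|$. On the complementary arc $T\setminus T_0$, the pointwise estimate $|\nabla u|/\sqrt{1+|\nabla u|^2}<1$ together with a routine approximation by interior curves gives the non-strict bound $|F_u(T\setminus T_0)|\leq |T\setminus T_0|$. Combining these,
\[
|T|=F_u(T)=F_u(T_0)+F_u(T\setminus T_0)\leq |F_u(T_0)|+|F_u(T\setminus T_0)|<|T_0|+|T\setminus T_0|=|T|,
\]
a contradiction. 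In case (b), Lemma~\ref{lem:flux}(iii) gives $F_u(T_0)=-|T_0|$, hence $F_u(T\setminus T_0)=|T|+|T_0|$; but the same pointwise bound forces $|F_u(T\setminus T_0)|\leq |T\setminus T_0|=|T|-|T_0|$, which is impossible. Either way we reach a contradiction, proving that $u\to+\infty$ on $T$.

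The main obstacle is really just the initial topological dichotomy producing the sub-arc $T_0$, and this is automatic from the continuity of $\bar u$ into $[-\infty,+\infty]$: either $\{|\bar u|<+\infty\}$ is non-empty and, being open in $T$, contains a compact sub-arc on which $\bar u$ is real-valued (hence bounded by compactness), giving case (a); or else the open set $\{\bar u\neq+\infty\}$ coincides with $\{\bar u=-\infty\}$, giving case (b). After that, the argument is a short flux accounting using the bounds already in Lemma~\ref{lem:flux}, so no new analytical input is needed.
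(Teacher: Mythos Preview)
The paper does not supply its own proof of this lemma; it is simply quoted from~\cite{marr1}. So there is nothing to compare against here, and your task reduces to whether the argument stands on its own.

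Your argument is correct and is in fact the standard way one proves such a statement. The flux integrand $\langle \nabla u/\sqrt{1+|\nabla u|^2},\eta\rangle$ has modulus at most $1$ pointwise, so $|F_u(\Gamma)|\le|\Gamma|$ for any sub-arc $\Gamma\subset T$; combined with additivity $F_u(T)=F_u(T_0)+F_u(T\setminus T_0)$, the hypothesis $F_u(T)=|T|$ forces $F_u(T_0)=|T_0|$ on \emph{every} sub-arc $T_0$. Then the strict inequality in Lemma~\ref{lem:flux}(ii) rules out any sub-arc with finite boundary values, and Lemma~\ref{lem:flux}(iii) rules out any sub-arc with boundary value $-\infty$. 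Your dichotomy producing $T_0$ is fine once one uses the standing assumption (stated just before the definition of flux) that $u$ extends continuously to $\overline\Omega$ with values in $[-\infty,+\infty]$.

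Two small remarks. First, you implicitly use that $T$ is a geodesic arc (so that any sub-arc $T_0$ is convex and Lemma~\ref{lem:flux}(ii) applies); this is not written in the statement but is the only situation in which the lemma is invoked in the paper, and is clearly intended. Second, the phrase ``routine approximation by interior curves'' for the non-strict bound on $T\setminus T_0$ is unnecessary: the pointwise bound on the integrand already gives $|F_u(\Gamma)|\le|\Gamma|$ directly, with no approximation needed.
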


\subsubsection{Divergence lines}
Let $\Omega\subset\mathbb{H}^2$ be a domain and $\{u_k\}_k$ a
sequence of minimal graphs on $\Omega$. We define the {\it
convergence domain} of $\{u_k\}_k$ as
\[
{\cal B}=\left\{p\in\Omega\ |\ \{|\nabla u_k(p)|\}_k \mbox{ is
    bounded}\right\},
\]
and the {\it divergence set} of $\{u_k\}_k$ as
\[
{\cal D}=\Omega-{\cal B}.
\]

The following proposition describes the convergence domain and the
divergence set of a sequence of minimal graphs.

\begin{proposition}[\cite{marr1}]\label{prop:div}
  Let $\Omega\subset\mathbb{H}^2$ be a domain and $\{u_k\}_k$ be a sequence of
  minimal graphs on~$\Omega$. Then:
  \begin{enumerate}
  \item ${\cal D}$ is composed of geodesic arcs contained in~$\Omega$
    (called divergence lines), each one joining two points of
    $\partial\Omega$ (including the vertices of $\Omega$).
  \item Let $L\subset {\cal D}$ be a divergence line. Passing to a
    subsequence, $|F_{u_k}(T)|\to|T|$ as $k\to+\infty$, for any
    geodesic arc $T\subset L$.
  \item If ${\cal D}=\emptyset$, then a subsequence of
    $\{u_k-u_k(p)\}_k$ converges uniformly on compact subsets of
    $\Omega$ to a minimal graph, for any $p\in\Omega$.
  \end{enumerate}
\end{proposition}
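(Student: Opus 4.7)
The plan is to prove parts (3), (1), (2) in that order, with (3) serving as an analytic warm-up and (1)--(2) obtained from a compactness argument applied to the minimal surfaces $G_k=\mathrm{graph}(u_k)$ inside $\mathbb{H}^2\times\mathbb{R}$.

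For part (3), the hypothesis ${\cal D}=\emptyset$ gives a locally uniform bound on $|\nabla u_k|$ on $\Omega$. Since \eqref{eq.min.surf} is then quasilinear uniformly elliptic on compact subsets of $\Omega$, classical interior estimates for quasilinear elliptic PDE yield uniform $C^{2,\alpha}_{\mathrm{loc}}$ bounds for the translated functions $u_k-u_k(p)$. A diagonal Arzela--Ascoli argument extracts a subsequence converging in $C^{2,\alpha}_{\mathrm{loc}}$ to a limit $u_\infty$ that still satisfies \eqref{eq.min.surf}, giving the desired minimal graph.

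For parts (1) and (2) the essential object is the horizontal projection of the downward unit normal to $G_k$,
$$X_k=\frac{\nabla u_k}{\sqrt{1+|\nabla u_k|^2}},$$
which satisfies $|X_k|<1$ pointwise, the minimal surface equation in divergence form $\mathrm{div}(X_k)=0$, and $|X_k(p)|\to 1$ if and only if $|\nabla u_k(p)|\to\infty$. Passing to a subsequence one may assume $|X_k|\to 1$ pointwise on ${\cal D}$, and after translating by an additive constant one has $u_k(p_0)=0$ at a fixed $p_0\notin{\cal D}$. Each $G_k$ is a graph, hence a stable minimal surface; curvature estimates for stable minimal surfaces then furnish a subsequence converging smoothly on $(\Omega\setminus{\cal D})\times\mathbb{R}$ and, globally on $\Omega\times\mathbb{R}$, as an integral minimal lamination. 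Leaves with non-vertical tangent plane are graphs and limit to the function $u_\infty$ produced by part (3) on $\Omega\setminus{\cal D}$. Each leaf with vertical tangent plane must be of the form $\gamma\times\mathbb{R}$ with $\gamma\subset\mathbb{H}^2$ a curve, and such a vertical cylinder is minimal if and only if $\gamma$ is a geodesic. The projection of the vertical part of the limit equals ${\cal D}$: the inclusion $\supseteq$ follows from $|X_k|\to 1$ on ${\cal D}$, which forces the geometric limit to be vertical there, and the reverse follows from smooth graph convergence off ${\cal D}$. Completeness of the lamination limit forces each connected component of ${\cal D}$ to be a geodesic arc joining two points of $\partial\Omega$, proving part (1).

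For part (2), along a divergence line $L$ the vertical leaf of the limit is the vertical plane over $L$, whose horizontal unit normal is $\pm\eta_L$, where $\eta_L$ denotes a unit normal to $L$ in $\mathbb{H}^2$. Hence $X_k\to\pm\eta_L$ uniformly on compact subsets of $L$ along the chosen subsequence, and for any geodesic arc $T\subset L$ with outer unit normal $\eta$,
$$F_{u_k}(T)=\int_T\langle X_k,\eta\rangle\,ds\longrightarrow\pm|T|,$$
giving $|F_{u_k}(T)|\to|T|$. The main obstacle is the lamination compactness step: one must justify smooth convergence off ${\cal D}$ and precisely identify the vertical part of the geometric limit with ${\cal D}$ (not merely a set containing it). A more PDE-flavored alternative avoids varifolds: pass to the weak-$*$ limit $X$ of $X_k$ in $L^\infty$, observe that $\mathrm{div}(X)=0$ and $|X|\le 1$, and invoke a local stream-function argument on $\{|X|=1\}$ to show that the integral curves of $X^\perp$ there are unit-speed gradient flow lines of a $1$-Lipschitz potential, hence hyperbolic geodesics.
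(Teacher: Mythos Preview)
The paper does not prove this proposition; it is quoted from \cite{marr1} and used as a black box, so there is no in-paper argument to compare your attempt against.

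On its own merits: your treatment of (3) is correct. For (1)--(2) the geometric strategy is the right one and close to what is actually done in \cite{marr1}, but the gap you yourself flag is real and your sketch does not close it. With a single normalization at $p_0\notin\mathcal{D}$ it is not clear that the limit lamination has any leaf over a given $p\in\mathcal{D}$: if $u_k(p)-u_k(p_0)\to\pm\infty$ the graphs simply leave every compact subset of $\{p\}\times\mathbb{R}$, and you cannot rescue this by saying $p\in\overline{\mathcal{B}}$, since the structure of $\mathcal{D}$ is precisely what is in question. The clean fix is to normalize separately at each $p\in\mathcal{D}$: the graphs of $u_k-u_k(p)$ pass through $(p,0)$, curvature estimates for stable minimal surfaces give a smooth limit leaf through $(p,0)$ with vertical tangent plane there, and since the angle function $\nu=\langle N,\partial_t\rangle$ is nonnegative on any limit of graphs and vanishes at $(p,0)$, the strong maximum principle forces $\nu\equiv 0$ on that leaf. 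Hence the leaf is $\gamma_p\times\mathbb{R}$ with $\gamma_p$ a geodesic through $p$; convergence of the unit normals along $\gamma_p$ yields both $\gamma_p\cap\Omega\subset\mathcal{D}$ and the flux limit in (2), and uniqueness of the limit leaf at a hypothetical crossing point shows distinct $\gamma_p$'s are disjoint, giving (1). Your PDE alternative via the weak-$*$ limit $X$ is too compressed to stand alone: knowing $\mathrm{div}\,X=0$, $|X|\le 1$, and $|X|=1$ on some a~priori thin set does not yield the eikonal equation in an open neighborhood, so the step ``integral curves of $X^\perp$ are geodesics'' cannot be read off without the surface-convergence argument above.
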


\subsection{Conjugate minimal surfaces}
\label{subsec:conj}

Let $\Sigma$ be a simply connected Riemann surface and
$X=(\varphi,h):\Sigma\rightarrow\mathbb{H}^2\times\mathbb{R}$ be a
conformal minimal immersion. It is known that $h$ is a real harmonic
function and $\varphi=\pi\circ X$ is a harmonic map from $\Sigma$
to~$\mathbb{H}^2$. Daniel~\cite{da2} and Hauswirth, Sa~Earp and
Toubiana~\cite{HST} proved that there exists a minimal immersion
$X^*=(\varphi^*,h^*):\Sigma\rightarrow
\mathbb{H}^2\times\mathbb{R}$, called {\it
  conjugate minimal immersion of $X$}, whose induced metric on
$\Sigma$ coincides with the one induced by $X$, and such that $h^*$
is the real harmonic conjugate function of $h$ and the Hopf
differential of $\varphi^*$ is $-Q_\varphi$, being $Q_\varphi$ be
the Hopf differential of $\varphi$. $X^*$ is well-defined up to an
isometry of $\mathbb{H}^2\times\mathbb{R}$.

If $N$ (resp. $N^*$) denotes the unit normal to $X$ (resp. $X^*$),
then $\langle N,\partial_t\rangle=\langle N^*,\partial_t\rangle$
(i.e. their angle maps coincide). Moreover, the correspondence $X
\leftrightarrow X^*$ maps:
\begin{itemize}
\item Vertical geodesics of $\mathbb{H}^2\times\mathbb{R}$ to horizontal geodesic
  curvature lines along which the normal vector field of the surface
  is horizontal.
\item Horizontal geodesics of $\mathbb{H}^2\times\mathbb{R}$ to geodesic curvature
  lines contained in vertical geodesic planes along which the normal
  vector field is tangent to the plane.
\end{itemize}

We will consider the conjugate surfaces of minimal graphs defined on
convex domains.  The surfaces obtained in this way are also minimal
graphs (and consequently embedded), as ensured by the following
Krust-type theorem given by Hauswirth, Toubiana and Sa Earp.

\begin{theorem}[\cite{HST}]\label{th:krust}
  If $\Sigma$ is a minimal graph over a convex domain $\Omega$ of
  $\mathbb{H}^2$, then $\Sigma^*$ is also a minimal graph over a
  (non-necessarily convex) domain of $\mathbb{H}^2$.
\end{theorem}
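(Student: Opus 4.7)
The plan is to split the proof into a local step (showing $\Sigma^*$ is locally a graph) and a global step (that the local graph extends to a global one).

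\textbf{Local step.} The key input from Subsection~\ref{subsec:conj} is that the angle function is preserved under conjugation: $\langle N^*,\partial_t\rangle=\langle N,\partial_t\rangle$. Since $\Sigma$ is a graph over the convex domain $\Omega$, it is transverse to $\partial_t$ everywhere, so this function is nowhere zero. Therefore $\Sigma^*$ is also transverse to vertical lines at every point, which means the projection $\pi\circ X^*:\Sigma\to\mathbb{H}^2$ is a local diffeomorphism, and its image $\Omega^*=\pi(X^*(\Sigma))$ is an open subset of $\mathbb{H}^2$ over which $\Sigma^*$ is locally a graph.

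\textbf{Global step.} I would argue the injectivity of $\pi\circ X^*$ by contradiction. Parameterize $\Sigma$ by its base so that $X(x)=(x,u(x))$ for $x\in\Omega$. Suppose there are distinct points $p,q\in\Omega$ with $\pi(X^*(p))=\pi(X^*(q))$. Let $\sigma:[0,L]\to\mathbb{H}^2$ be the hyperbolic geodesic from $p$ to $q$; by convexity of $\Omega$, $\sigma\subset\Omega$. Then $\alpha:=\pi\circ X^*\circ\sigma$ is a closed-ended curve in $\mathbb{H}^2$, i.e.\ $\alpha(0)=\alpha(L)$. The contradiction will come from a Krust-type monotonicity: using the conjugation relation, which at the tangent-plane level amounts to a rotation by $\pi/2$ composed with the ambient differential (so that $dh^*=*\,dh$ and $dF^*$ is determined from $dF$ and the gradient of $u$), one decomposes the velocity $\alpha'(s)$ and shows that it projects with a strict sign onto a well-chosen horizontal direction. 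Geometrically, transversality of $\Sigma$ to $\partial_t$ plus the fact that $\sigma$ is a base geodesic forces $\alpha'(s)$ to lie in an open half-space determined by the ideal direction ``perpendicular'' to $\sigma$ at its midpoint, and hence $\alpha$ cannot close up.

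\textbf{Main obstacle.} Identifying the right monotone quantity in $\mathbb{H}^2$ is the delicate step. In $\mathbb{R}^3$ one projects onto a direction orthogonal to the chord $pq$ and gets strict monotonicity immediately from the Cauchy--Riemann-type equations relating $X$ and $X^*$ together with convexity of $\Omega$. In $\mathbb{H}^2$ no global parallel direction exists, so I expect to replace this linear functional by an adapted geometric quantity — for instance, the signed hyperbolic distance from $\alpha(s)$ to the perpendicular bisector of $\sigma$, or the Busemann function associated with an ideal endpoint of that bisector — and then verify, using the explicit formulas for $dF^*$ in terms of $dF$, $\nabla u$ and $N$, that this distance is strictly monotone along $\alpha$ as long as $\sigma$ stays in the convex domain $\Omega$. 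Once this monotonicity is established, $\alpha(0)=\alpha(L)$ becomes impossible, contradicting our assumption and completing the proof that $\pi\circ X^*$ is an embedding onto $\Omega^*$.
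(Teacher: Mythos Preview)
The paper does not contain a proof of this statement: Theorem~\ref{th:krust} is quoted from~\cite{HST} and used as a black box. There is therefore no ``paper's own proof'' to compare your proposal against.

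Evaluating your argument on its own merits: the local step is correct and is exactly the right observation --- preservation of the angle map $\langle N,\partial_t\rangle$ under conjugation immediately gives that $\pi\circ X^*$ is a local diffeomorphism. The global step, however, is not a proof but a plan, and you say so yourself in the ``Main obstacle'' paragraph. You have not identified the monotone quantity, only listed candidates (signed distance to a perpendicular bisector, a Busemann function) and asserted that one of them should work ``once this monotonicity is established''. That is the entire content of the theorem; without it you have only reproved that $\Sigma^*$ is an immersed multigraph. In particular, your suggestion to project onto a direction ``perpendicular to $\sigma$ at its midpoint'' does not obviously survive the curvature of $\mathbb{H}^2$: parallel transport of that direction along $\sigma$ does not keep it orthogonal to $\sigma$, and neither the signed distance to a fixed geodesic nor a Busemann function has an evident sign relation to the explicit expression of $d(\pi\circ X^*)$ in terms of $\nabla u$ without further work. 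The actual argument in~\cite{HST} goes through the harmonic-map/Hopf-differential description of $\varphi$ and $\varphi^*$ rather than through an ad hoc monotone functional, and the convexity of $\Omega$ enters via properties of the harmonic map $\varphi$, not via a chord argument in the target. If you want to complete your route you must either produce the monotone quantity and verify its derivative has a sign, or switch to the harmonic-map framework used in the cited reference.
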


\section{Main Theorem}
Recall that the purpose of this paper is to show that any domain in
the plane which is not simply connected, can be properly embedded
into $\mathbb{H}^2 \times\mathbb{R}$ as a minimal bi-graph.  Since
this fact is known in the case of finite topology \cite{moro1,pyo1},
then we will focus throughout this section in the construction of
examples with infinite topology. The case of surfaces with an
uncountable number of ends will be particularly interesting.

The main tool in all this construction is Lemma~\ref{lem:main},
which gives us the approximation of an admissible pair by other
admissible pair with an extra ideal vertex. Its proof follows from
the ideas of Lemma 3.2 in~\cite{R}. Roughly speaking, this means
that we are able to increase the topology of the conjugate graph by
using surfaces which are close enough on compact regions. This kind
of ideas has been extensively used in the study of the Calabi-Yau
problem for minimal surfaces in $\mathbb{R}^3$.

Given an admissible pair $(\Omega,u)$, we call ${\cal V}_i(\Omega)$
the set of interior vertices of $\Omega$, and ${\cal
V}_\infty(\Omega)$ the set of its ideal vertices. We will finally
call ${\cal V}(\Omega)$ the set of vertices of $\Omega$, i.e.
\[
{\cal V}(\Omega)={\cal V}_i(\Omega)\cup{\cal V}_\infty(\Omega).
\]

\begin{figure}[htbp]
  \begin{center}
    \includegraphics[width=0.5\textwidth]{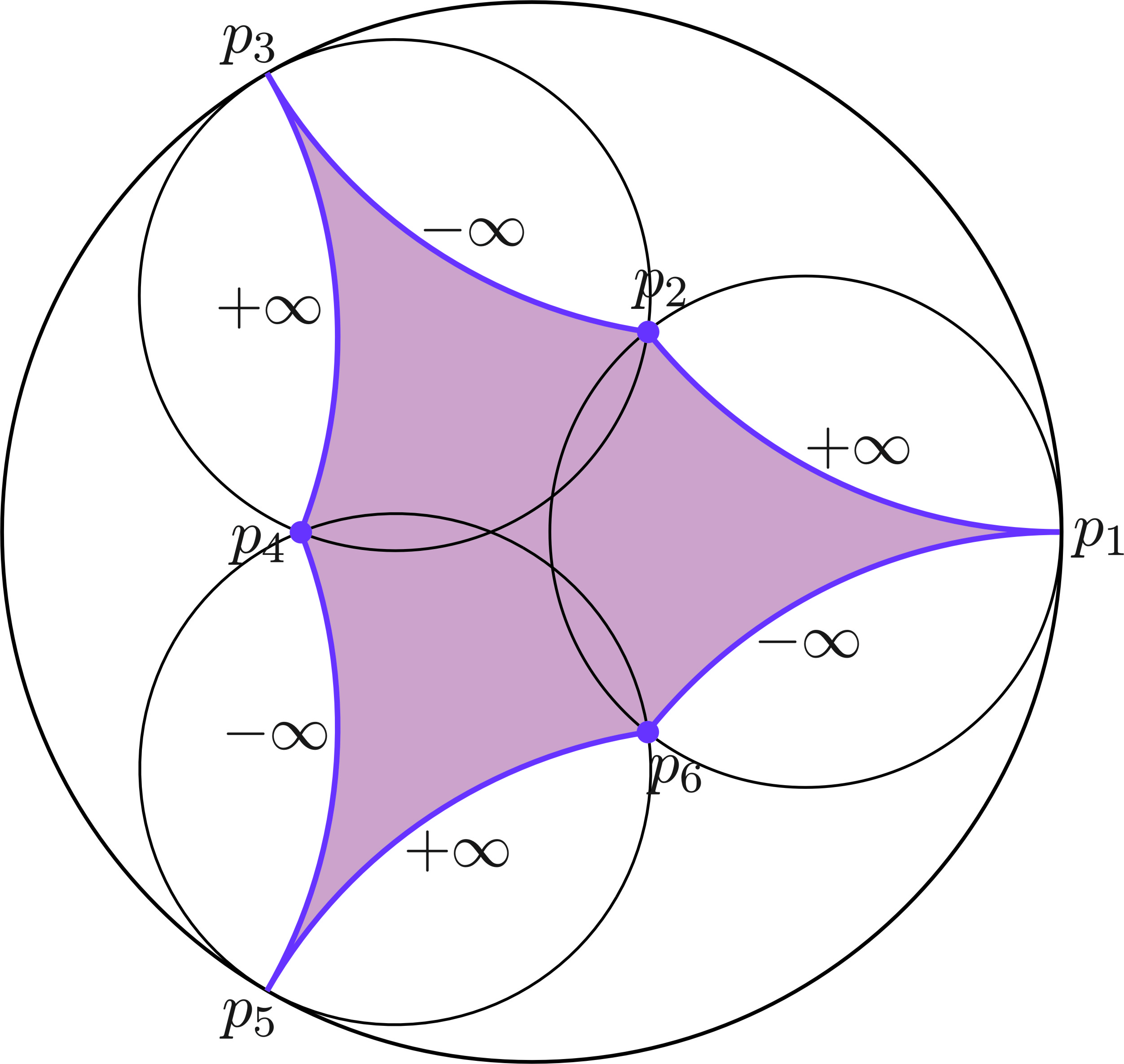}
  \end{center}
  \caption{}
  \label{fig:dominio0}
\end{figure}

\begin{lemma}
  \label{lem:main} Let $\varepsilon,\delta$ be positive numbers, and
  $(\Omega,u)$ an admissible pair. For any ideal vertex $P$ of
  $\Omega$ and any $R>0$ such that the hyperbolic disk $B(R)$ centered
  at $(0,0)$ of radius $R$ contains all the interior vertices
  of~$\Omega$, there exists an admissible pair
  $(\widetilde\Omega,\widetilde u)$ verifying:
  \begin{enumerate}
  \item Each boundary edge of $\Omega$ that does not have $P$ as an
    endpoint, is contained in the boundary of~$\widetilde\Omega$.  In
    particular, ${\cal V}(\Omega)-\{P\}\subset{\cal
      V}(\widetilde\Omega)$.
  \item $\widetilde\Omega$ only contains two ideal vertices and an
    interior vertex which are not vertices of $\Omega$; this is,
    ${\cal V}_\infty(\widetilde\Omega)-{\cal
      V}_\infty(\Omega)=\{P_1,P_2\}$ and ${\cal
      V}_i(\widetilde\Omega)-{\cal V}_i(\Omega)=\{P_0\}$.
  \item $\Omega\cap B(R)\subset\widetilde\Omega\cap B(R)$. In
    particular, $P_0\in\mathbb{H}^2-B(R)$.
  \item $\|\widetilde u-u\|_n<\varepsilon$ in $\Omega_\delta\cap B(R)$, for
    any $n \in\mathbb{N}$, where $\Omega_\delta=\{p\in\Omega\ |\
    \mbox{dist}_{\mathbb{H}^2}(p,\partial\Omega)\geq\delta\}$.
  \end{enumerate}
\end{lemma}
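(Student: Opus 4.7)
My plan is to build $\widetilde\Omega$ as a member of a one-parameter family $\widetilde\Omega(t)$ of admissible domains concentrated near $P$, apply Theorem~\ref{th:JS} to each $\widetilde\Omega(t)$, and prove smooth convergence of the resulting Jenkins-Serrin solutions to $u$ as $t\to 0^+$ via the divergence-set analysis of Proposition~\ref{prop:div}. This is a local adaptation at a single ideal vertex of the strategy of Lemma~3.2 in~\cite{R}. Concretely, write $P=p_{2i_0-1}$ with interior neighbours $p_{2i_0-2}$ and $p_{2i_0}$. For each small $t>0$ I choose two ideal points $P_1(t),P_2(t)\in\partial_\infty\mathbb{H}^2$ on opposite sides of $P$ at $\partial_\infty$-distance $t$, and an interior point $P_0(t)$ placed outside $B(R)$ on the geodesic ray pointing toward $P$. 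The domain $\widetilde\Omega(t)$ is bounded by the edges of $\Omega$ not incident to $P$ together with the four new geodesic arcs $p_{2i_0-2}P_1$, $P_1P_0$, $P_0P_2$, $P_2p_{2i_0}$, whose labels as $A$- or $B$-type are forced by compatibility with the $\pm\infty$ data on the unchanged edges. The modification occurs outside $B(R)$, so conditions~1, 2 and 3 of the statement are built in, and convexity of $\widetilde\Omega(t)$ is arranged by a suitable radial placement of $P_0(t)$.

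\textbf{Admissibility and existence.} To apply Lemma~\ref{lem:JS}, observe that at every old ideal vertex $p_{2i-1}$ with $i\neq i_0$ the horodisk $D_{2i-1}$ is unchanged (its interior neighbours are), and the only new interior vertex $P_0(t)$ lies close to the distinct ideal point $P$ and hence outside $\overline{D_{2i-1}}$ for $t$ small. At the new ideal vertices $P_1(t),P_2(t)$ the horodisks through their respective interior neighbours can be made thin enough to avoid every remaining interior vertex of $\Omega$ by taking $t$ small and placing $P_0(t)$ appropriately. The equidistance condition of Definition~\ref{Def:JS}(i) is then achieved by a continuous adjustment of the radial location of $P_0(t)$ along the ray toward $P$. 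Lemma~\ref{lem:JS} yields admissibility, and Theorem~\ref{th:JS} produces a Jenkins-Serrin solution $\widetilde u_t$, normalised by $\widetilde u_t(\mathbf{0})=0$.

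\textbf{Passage to the limit and the $C^n$-estimate.} For a sequence $t_k\to 0^+$, restrict $\widetilde u_{t_k}$ to $\Omega$; this is eventually well-defined on every compact subset of $\Omega$ because the modification shrinks to $P$. I apply Proposition~\ref{prop:div} on $\Omega$ and claim the divergence set $\mathcal{D}$ is empty. If $L\subset\overline{\Omega}$ were a divergence geodesic, then $|F_{\widetilde u_{t_k}}(T)|\to|T|$ for every arc $T\subset L$ by Proposition~\ref{prop:div}(2); taking the flux balance $F_{\widetilde u_{t_k}}(\partial W_{t_k})=0$ from Lemma~\ref{lem:flux}(i) on a suitable component $W_{t_k}\subset\widetilde\Omega(t_k)\setminus L$ and passing to the limit -- using Lemma~\ref{lem:flux}(iii) to account for the $\pm\infty$ edges, and using that the two short new edges $P_1P_0$, $P_0P_2$ carry flux bounded by their length, which tends to $0$ -- exhibits an inscribed polygonal subdomain $\mathcal{P}\subsetneq\Omega$ failing $2\alpha(\mathcal{P})<|\Gamma(\mathcal{P})|$ or $2\beta(\mathcal{P})<|\Gamma(\mathcal{P})|$, contradicting the admissibility of $\Omega$. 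Thus $\mathcal{D}=\emptyset$, and Proposition~\ref{prop:div}(3) gives a subsequence converging smoothly on compact subsets of $\Omega$ to a minimal graph $\bar u$ with $\bar u(\mathbf{0})=0$. The boundary data on unchanged edges pass directly; on $B_{i_0-1}$ and $A_{i_0}$ they follow from flux convergence and Lemma~\ref{lem:inf_flux}, using that the new edges of $\widetilde\Omega(t_k)$ on which $\widetilde u_{t_k}$ equals $\mp\infty$ converge to $B_{i_0-1}$ and $A_{i_0}$. The uniqueness part of Theorem~\ref{th:JS} then identifies $\bar u$ with $u$. Standard interior Schauder estimates for the minimal graph equation upgrade this uniform convergence on a neighbourhood of $\Omega_\delta\cap B(R)$ to $C^n$-convergence, so taking $k$ large enough, $\widetilde u:=\widetilde u_{t_k}$ satisfies condition~4.

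\textbf{Main obstacle.} The delicate step is the exclusion of divergence lines: one must carefully balance the fluxes carried by the two short new edges $P_1P_0$, $P_0P_2$, verifying that they really vanish in the limit, and then check that the resulting flux identity for $\Omega$ reproduces one of the strict admissibility inequalities of Definition~\ref{Def:JS} with equality -- contradicting the admissibility of~$\Omega$. A secondary technical difficulty is the simultaneous fulfilment, for each small $t$, of the equidistance condition of Definition~\ref{Def:JS}(i) at both new ideal vertices together with the smallness of the associated horodisks, which should follow from a continuity/intermediate-value argument on the radial position of $P_0(t)$.
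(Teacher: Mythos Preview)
Your overall strategy coincides with the paper's: build a one-parameter family of admissible domains degenerating to $\Omega$ at $P$, apply Theorem~\ref{th:JS}, and rule out divergence lines to get smooth convergence. However, two steps in your execution do not work as written.

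First, the edges you call ``short'' are not short: each of $P_1P_0$ and $P_0P_2$ joins an interior point to a point of $\partial_\infty\mathbb{H}^2$ and therefore has \emph{infinite} hyperbolic length, so ``flux bounded by their length, which tends to $0$'' is false. The paper handles this by truncating everything with a nested family of horocycles $H_{2i-1}(n,m)$ at each ideal vertex and tracking the truncated quantities $\alpha_n(m)$, $\beta_n(m)$, $f_n(m)$ as $m\to\infty$; the divergence-line exclusion then splits into five cases depending on whether each endpoint of $L$ lies at an interior point of an edge, at an ideal vertex $p_{2i+1}\neq P$, or at $P$ itself. The last two cases ($L$ ending at $P$) are genuinely different from the others, because there $L_n\subset L$ rather than $L\subset L_n$, and your sketch does not cover them. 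Second, your placement of $P_0(t)$ on a fixed geodesic ray with one radial degree of freedom cannot in general satisfy the equidistance condition of Definition~\ref{Def:JS}(i) at \emph{both} new ideal vertices simultaneously (two independent conditions). The paper avoids any intermediate-value argument by taking $P_0$ to be the intersection point (nearer $P$) of the horocycle at $P_1$ through $p_{2i_0-2}$ and the horocycle at $P_2$ through $p_{2i_0}$; this choice makes condition~(i) automatic at both new vertices and also keeps $\widetilde\Omega$ convex.
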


\begin{figure}[htbp]
  \begin{center}
    \includegraphics[width=0.5\textwidth]{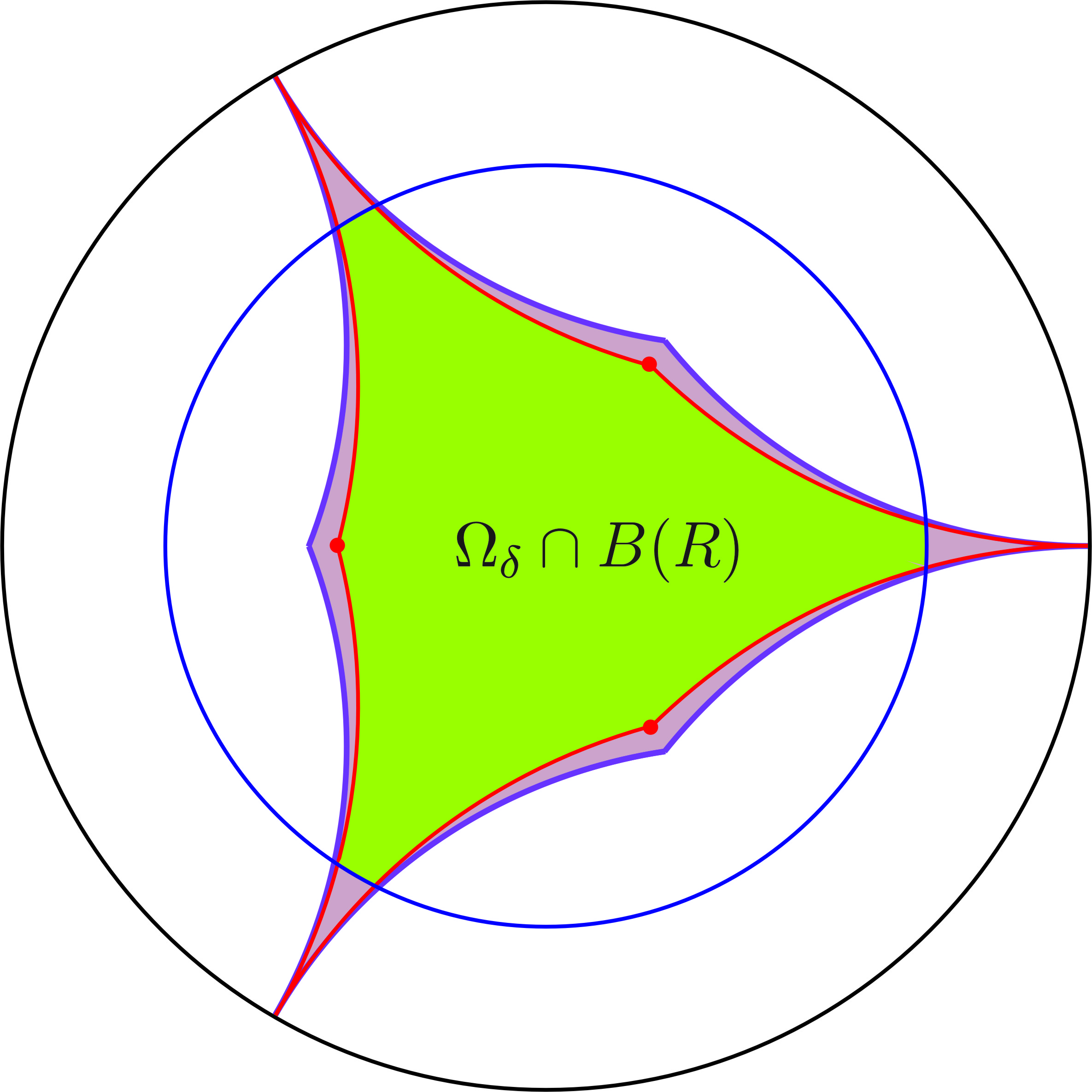}
  \end{center}
  \caption{}
  \label{fig:dominio3}
\end{figure}

\begin{proof}
  Up to an isometry of $\mathbb{H}^2$, we can assume $P=(1,0)$.  We call
  $p_1,p_2,\cdots,p_{2k}$ the vertices of $\Omega$, cyclically
  ordered, so that $p_1=P$.  We consider $P_n^+=e^{i/n},\
  P_n^-=e^{-i/n}$, for any $n\in\mathbb{N}$. It is clear that $P_n^\pm\to P$
  as $n\to+\infty$. We call $C_n^+$ (resp. $C_n^-$) the horocycle at
  $P_n^+$ (resp. $P_n^-$) passing through $p_2$ (resp. $p_{2k}$). For
  $n$ big enough, $C_n^+\cap C_n^-\neq\emptyset$. We call $P_n^0$ the
  intersection point in $C_n^+\cap C_n^-$ which is closer to $P$ (in
  the sense that the horodisk at $P$ passing through $P_n^0$ is
  contained in the horodisk at $P$ passing through the other point in
  $C_n^+\cap C_n^-$). We take $n$ big enough to assure
  $P_n^0\in\mathbb{H}^2-B(R)$.

  We call $p_1(n)=P_n^+$, $p_2(n)=p_2,\cdots,p_{2k}(n)=p_{2k}$,
  $p_{2k+1}(n)=P_n^-$, $p_{2k+2}(n)=P_n^0$, and $\Omega_n$ the
  polygonal domain with vertices $p_1(n),p_2(n),\cdots,p_{2k+2}(n)$.
  From the fact that $\Omega$ is an admissible domain and using that
  all the interior vertices of $\Omega_n$ remain fixed except for
  $p_{2k+2}(n)$, we can deduce that $\Omega_n$ is an admissible domain
  for $n$ large (here we use Lemma~\ref{lem:JS}). Let
  $u_n:\Omega\to\mathbb{R}$ be the solution to the minimal graph
  equation~\eqref{eq.min.surf} on $\Omega_n$ such that
  $(\Omega_n,u_n)$ is an admissible pair (it exists by
  Theorem~\ref{th:JS}).  It is clear that $\Omega_n\to\Omega$ as
  $n\to+\infty$. Let us prove that $u_n\to u$ uniformly on compact
  sets of $\Omega$.  By Proposition~\ref{prop:div}, it suffices to
  prove that the sequence $\{u_n\}$ does not have any divergence line.

  \begin{figure}[htbp]
    \begin{center}
     \includegraphics[width=0.5\textwidth]{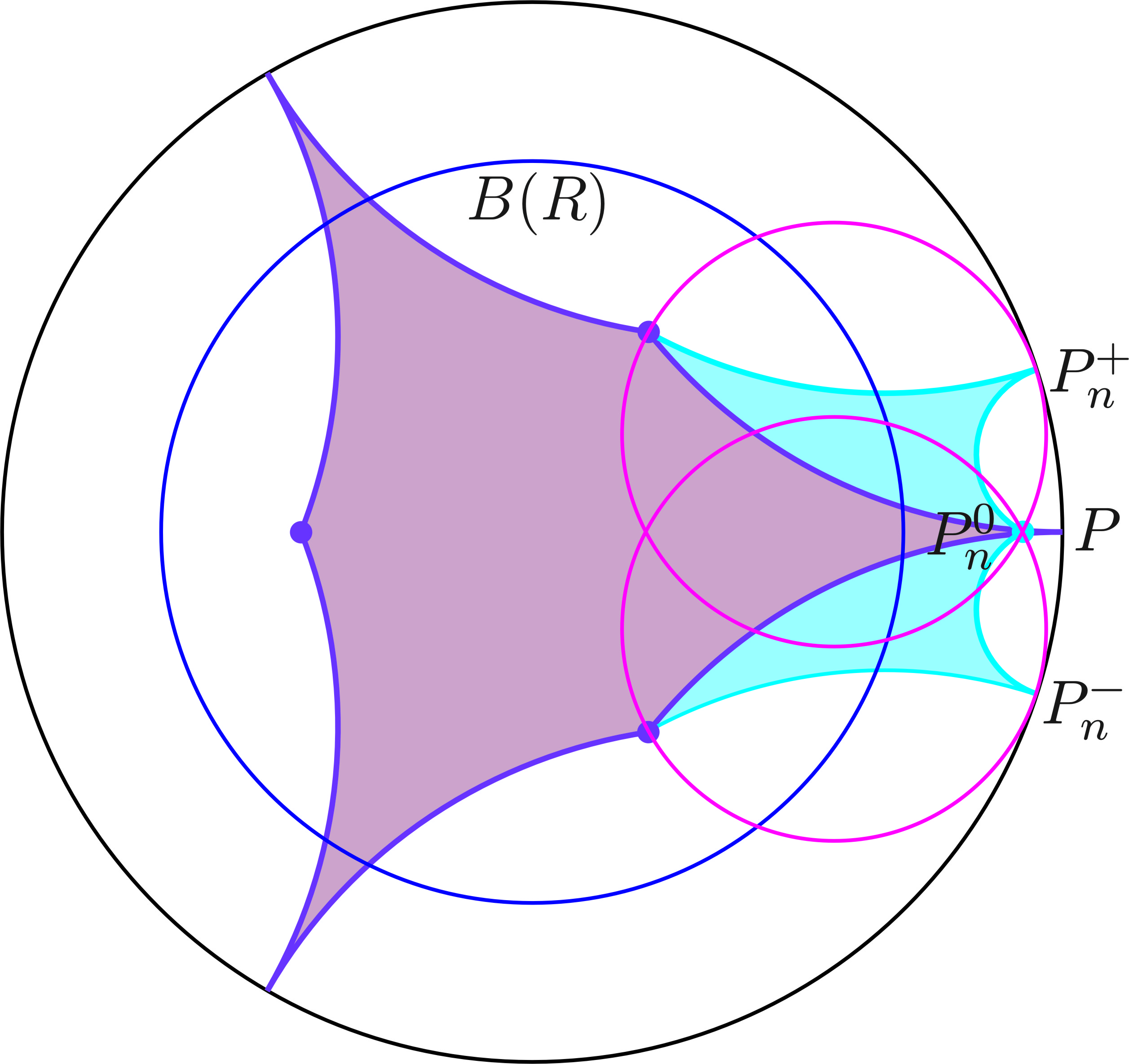}
    \end{center}
    \caption{}
    \label{fig:dominio1}
  \end{figure}

  Suppose by contradiction that $L\subset\Omega$ is a divergence line
  for $\{u_n\}$.  We call $L_n$ the intersection of $\Omega_n$ with
  the complete geodesic of $\mathbb{H}^2$ containing $L$. Since $\Omega_n$ is
  convex (by the choice of $P_n^0$), we get that $L_n$ is connected.
  Let ${\cal P}_n$ be a component of
  $\Omega_n-L_n$.

  For any $i=1,\cdots,k+1$, we call $D_{2i-1}(n)$ the open horodisk at
  $p_{2i-1}(n)$ passing through $p_{2i-2}(n),p_{2i}(n)$, and we
  consider a sequence of nested horocycles $H_{2i-1}(n,m)$ at
  $p_{2i-1}(n)$ contained in $D_{2i-1}(n)$ such that ${\rm
    dist}_{\mathbb{H}^2}(H_{2i-1}(n,m),\partial D_{2i-1}(n))=m$, for any $m$.
  In particular, for $m$ large we have $H_{2i-1}(n,m)\cap
  H_{2j-1}(n,m)=\emptyset$, if $i\neq j$. Let ${\cal P}_n(m)$ be the
  polygonal domain bounded by the part of $\partial{\cal P}_n$ outside
  the horocycles $H_{2i-1}(n,m)$, together with geodesic arcs joining
  the corresponding points in $\partial{\cal
    P}_n\cap\left(\cup_{i}H_{2i-1}(n,m)\right)$.  We also denote
  \[
  \alpha_n(m)= \sum_{i=1}^{k+1} |A^n_i\cap\partial{\cal P}_n(m)|,
    \qquad
    \beta_n(m)=\sum_{i=1}^{k+1} |B^n_i\cap\partial{\cal P}_n(m)|,
  \]
  \[
  f_n(m)=F_{u_n}(\partial{\cal P}_n(m)-\partial{\cal P}_n),
  \]
  where $A^n_i=(p_{2i-1}(n),p_{2i}(n))_{\mathbb{H}^2}$ and
  $B^n_i=(p_{2i}(n),p_{2i+1}(n))_{\mathbb{H}^2}$.  We observe that, for any
  fixed $n$, $|f_n(m)|<|\partial{\cal P}_n(m)-\partial{\cal P}_n|\to
  0$ as $m\to+\infty$.   We can choose ${\cal P}_n$ to have
  \[
  \beta_n(m)\geq \alpha_n(m).
  \]

  We consider similar definitions associated to $\Omega$: For any
  $i=1,\cdots,k$, let $D_{2i-1}$ be the open horodisk at $p_{2i-1}$
  passing through $p_{2i-2},p_{2i}$, and we consider a sequence of
  nested horocycles $H_{2i-1}(m)$ at $p_{2i-1}$ contained in
  $D_{2i-1}$ such that ${\rm dist}(H_{2i-1}(m),\partial D_{2i-1})=m$,
  for any $m$.

  We denote by $L(m)$ (resp. $L_n(m)$) the geodesic arc in $L$
  (resp. $L_n$) outside the horocycles $H_{2i-1}(m)$
  (resp. $H_{2i-1}(n,m)$).  By Lemma~\ref{lem:flux},
  \[
  F_{u_n}(L_n(m))=\beta_n(m)-\alpha_n(m)-f_n(m).
  \]
  We observe that $F_{u_n}(L_n(m))\geq 0$ for $m$ large.
  \begin{itemize}
  \item Suppose $L$ has finite length. Then $L$ joins a point
    $q_1\in[p_{2i},p_{2i+1})_{\mathbb{H}^2}\cup(p_{2i+1},p_{2i+2})_{\mathbb{H}^2}$ to
    a point
    $q_2\in[p_{2j},p_{2j+1})_{\mathbb{H}^2}\cup(p_{2j+1},p_{2j+2})_{\mathbb{H}^2}$,
    with $0\neq i\neq j$ (see Figure \ref{fig:dominio_LinDiv}).  We
    consider $m$ large enough so that $L(m)=L$ and $L_n(m)=L_n$.  The
    endpoints of $L_n$ are $q_1$ and another point that we are going
    to call $q_2(n)$ (notice that $q_2(n)=q_2$ when $j \neq 0$).  For
    $n$ large, one has $L\subset L_n$ and
    $|L_n|=|L|+\delta_n<+\infty$, where $\delta_n\geq 0$ converges to
    zero as $n\to +\infty$ ($\delta_n=0$ in the case
    $j\neq 0$).\\
    In this case, $c_n=\beta_n(m)-\alpha_n(m)$ does not depend on $m$ (it is
    also constant on $n$ when $j\neq 0$). Taking limits when $m$ goes
    to $+\infty$, we get $F_{u_n}(L_n)=c_n$. On the other hand,
    $|F_{u_n}(L_n)|\to|L|$ as $n\to+\infty$. Then $c_n\to|L|$. Let us
    see this is not possible. We call $C_1$ (resp. $C_2$, $C_2(n)$)
    the horocycle at $p_{2i+1}$ (resp. $p_{2j+1}$, $p_{2j+1}(n)$ )
    passing through $q_1$ (resp. $q_2$, $q_2(n)$), and
    \[
    d_1=\mbox{dist}(C_1,p_{2i}),\quad d_2=\mbox{dist}(C_2,p_{2j}),
    \quad d_2(n)=\mbox{dist}(C_2(n),p_{2j}(n)).
    \]
    We have that $|d_1-d_2(n)|=c_n$. Suppose $d_1>d_2$ (the case
    $d_2>d_1$ follows analogously). Thus, $d_1>d_2(n)$ for $n$ large
    enough.  Taking limits as $n \to \infty$ we have
    $d_1=|L|+d_2$. That implies that $p_{2j}$ (if
    $q_2\in[p_{2j},p_{2j+1})_{\mathbb{H}^2}$) or $p_{2j+2}$ (if
    $q_2\in(p_{2j+1},p_{2j+2})_{\mathbb{H}^2}$) lies on $D_{2i+1}$, a
    contradiction with the fact that $\Omega$ is admissible (see
    Lemma~\ref{lem:JS}).

    \begin{figure}[htbp]
      \begin{center}
        \includegraphics[width=0.5\textwidth]{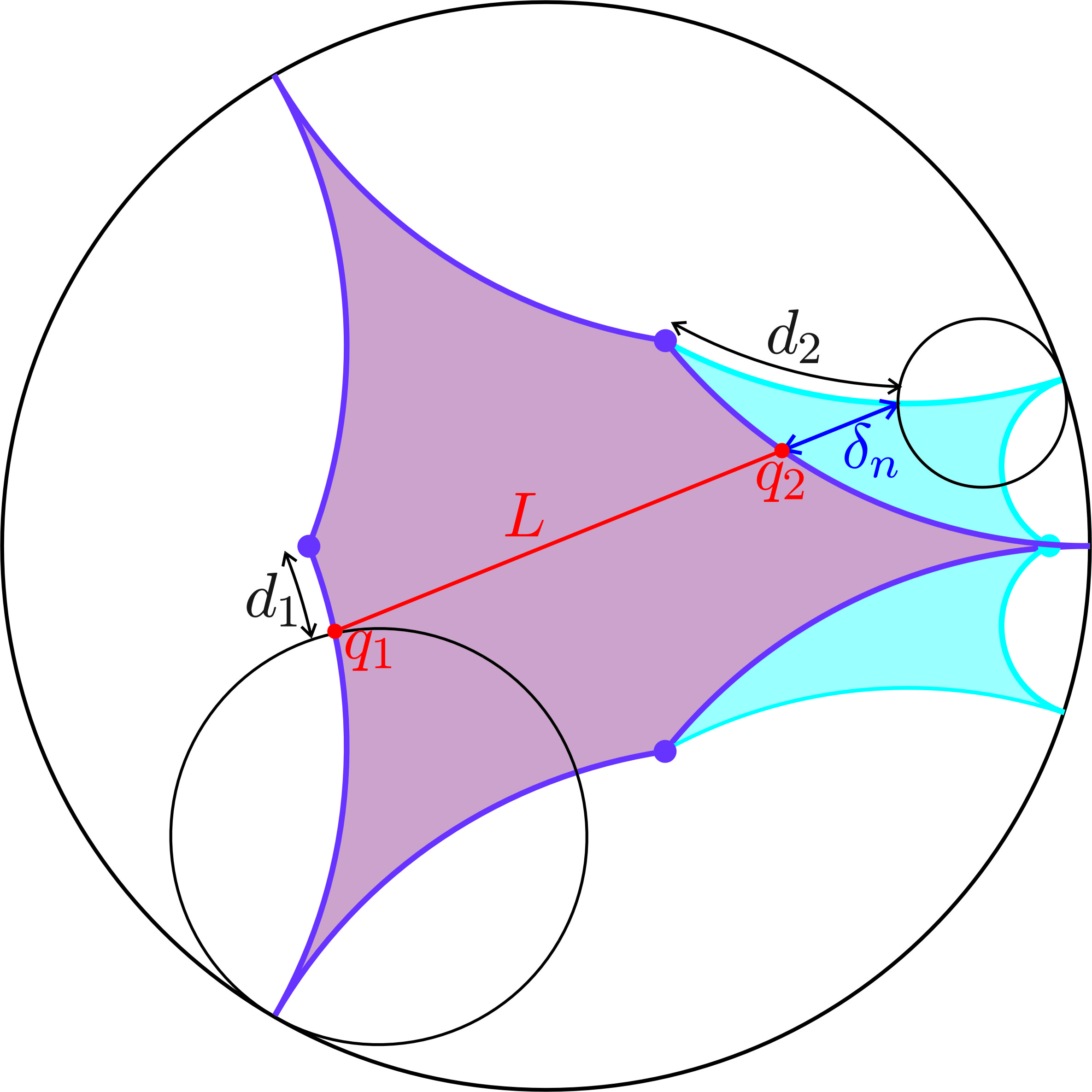}
      \end{center}
      \caption{}
      \label{fig:dominio_LinDiv}
    \end{figure}

  \item Now we suppose that $L$ joins an ideal vertex $p_{2i+1}$,
    $i\neq 0$, to
    $q\in[p_{2j},p_{2j+1})_{\mathbb{H}^2}\cup(p_{2j+1},p_{2j+2})_{\mathbb{H}^2}$, with
    $j\neq i$.  It holds $|L(m)|=m+d$, for some constant $d\in\mathbb{R}$. And
    for $n$ large, $L\subset L_n$ and $|L_n(m)|=|L(m)|+\delta_n$, with
    $\delta_n\geq 0$ converging to
    zero. \\
    On the other hand, for $m$ large we have that
    $c_n=m+\alpha_n(m)-\beta_n(m)\geq 0$ is constant on $m$ ($c_n=0$ when
    $q=p_{2j}$).  Then
    \[
    |L(m)|-|F_{u_n}(L(m))|=d+c_n+f_n(m)+\lambda_n,
    \]
    where $\lambda_n=F_{u_n}(L_n(m)-L(m))$ converges to zero as
    $n\to+\infty$. Since $|L(m)|-|F_{u_n}(L(m))|\to 0$ as
    $n\to+\infty$, we conclude that $c_n\to -d$. That implies that
    $p_{2j}\in D_{2i+1}$, if $q\in[p_{2j},p_{2j+1})_{\mathbb{H}^2}$, or
    $p_{2j+2}\in D_{2i+1}$, if $q\in(p_{2j+1},p_{2j+2})_{\mathbb{H}^2}$, a
    contradiction.

  \item We consider now that $L$ joins two ideal vertices
    $p_{2i+1},p_{2j+1}$, with $i\neq j$ both different from zero. Then
    we have $\alpha_n(m)=\beta_n(m)$ because of the choice of horocycles
    above.  For any compact geodesic arc $T\subset L_n$ and $m$ large,
    we have $|F_{u_n}(T)|\leq |F_{u_n}(L_n(m))|=|f_n(m)|$. Taking
    $m\to+\infty$, we get $F_{u_n}(T)=0$. But this contradicts that
    $|F_{u_n}(T)|\to|T|$ as $n\to+\infty$.

  \item If $L$ joins $p_1$ to another ideal vertex $p_{2i+1}$, $i\neq
    0$, then $L_n\subset L$ for any $n$. We have
    $\beta_n(m)-\alpha_n(m)=m-c_n$, with $c_n\geq 0$ independent of $m$
    ($c_n=0$ when $L_n$ finishes at $p_{2k+2}(n)$), and
    $|L_n(m)|=m+\delta_n$, where $\delta_n\in\mathbb{R}$.  Then,
    \[
    |L_n(m)|-|F_{u_n}(L_n(m))|=\delta_n+c_n+f_n(m)\to\delta_n+c_n,\quad\mbox{as
    } m\to+\infty.
    \]
    Since $|L_n(m)|-|F_{u_n}(L_n(m))|\to 0$ as $n\to+\infty$, we
    conclude that $\delta_n+c_n\to 0$. That implies that, for $n$ big
    enough, $p_{2k+2}(n)\in D_{2i+1}$, a contradiction, as $\Omega_n$
    is admissible.

  \item Finally, let us consider that $L$ joins $p_1$ to a point
    $q\in[p_{2j},p_{2j+1})_{\mathbb{H}^2}\cup(p_{2j+1},p_{2j+2})_{\mathbb{H}^2}$, with
    $j\neq 0$ (excluding the case $q=p_2, p_{2k}$). In this case we
    have $L_n\subset L$, $|L_n|<+\infty$ and $|L_n|=|L_n(m)|$ for big
    $m$.  When $n\to+\infty$, $|L_{n}(m)|-|F_{u_n}(L_{n}(m))|\to 0$,
    for any $m$. On the other hand,
    $|L_{n}(m)|-|F_{u_n}(L_n(m))|=|L_{n}|-|F_{u_n}(L_n)|\to
    |L_{n}|-c_n$ as $m\to+\infty$, where $c_n=\beta_n(m)-\alpha_n(m)$ for
    any $m$. The only possibility is $|L_{n}|-c_n\to 0$ as
    $n\to+\infty$. That contradicts the fact that $|L_{n}|\to+\infty$
    when $n\to+\infty$ while $c_n$ remains bounded.
  \end{itemize}

  Then we get that $\{|\nabla u_n|\}_n$ is uniformly bounded on
  compact sets of $\Omega$. Then Lemma~\ref{lem:main} holds for
  $(\widetilde\Omega,\widetilde u)=(\Omega_{n_0},u_{n_0})$ with some
  $n_0$ big enough, taking $P_0=P_{n_0}^0$, $P_1=P_{n_0}^+$ and
  $P_2=P_{n_0}^-$.
\end{proof}

Using Lemma~\ref{lem:main} we are able to prove the main result of
this paper.

\begin{theorem}
  \label{th:main}
  Let $\Sigma$ be a non-simply connected planar domain. Then, there
  exists a proper minimal embedding $f :\Sigma \to\mathbb{H}^2 \times\mathbb{R}$. Furthermore, $f$ satisfies:
  \begin{enumerate}[(1)]
  \item $f(\Sigma)$ is a vertical bigraph, symmetric with respect a
    horizontal slice.
  \item The annular ends of $f(\Sigma)$ are asymptotic to vertical
    planes.
  \item The embedding $f$ can be constructed so that for any two
    distinct ends $E_1$, $E_2$ of $\Sigma$, the limit sets\footnote{
      See Definition~\ref{def:limit} for the definition of the limit
      set of an end of a surface in a three-manifold. Recall that
      $\partial_\infty(\mathbb{H}^2\times\mathbb{R})=(\partial_\infty\mathbb{H}^2 \times\mathbb{R})
      \cup (\mathbb{H}^2\times \{\pm \infty\})$.} $L(E_1)$, $L(E_2)$ in
    $\partial_\infty (\mathbb{H}^2 \times\mathbb{R})$ are disjoint.
  \end{enumerate}
\end{theorem}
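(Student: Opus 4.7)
Since the finite topology case is settled in \cite{moro1,pyo1}, I assume $\Sigma$ has infinitely many ends and invoke Corollary~\ref{co:simple} to fix a compact exhaustion $\Sigma_1\subset\Sigma_2\subset\cdots$ in which $\Sigma_1$ is a sphere minus two disks and each $\Sigma_{n+1}-\mbox{Int}(\Sigma_n)$ contains exactly one pair-of-pants component. My plan is to mirror this topological exhaustion by a nested sequence of admissible pairs $(\Omega_n,u_n)$, each $\Omega_n$ being a convex semi-ideal polygonal domain with $2(n+1)$ vertices, such that the conjugate minimal graph $\Sigma_n^*$ of $u_n$, doubled across the horizontal slice $\{t=0\}$ by Schwarz reflection, produces a properly embedded minimal bigraph $M_n\subset\mathbb{H}^2\times\mathbb{R}$ homeomorphic to $\Sigma_n$ with $n+1$ annular ends, each asymptotic to a vertical plane $\gamma_i^*\times\mathbb{R}$ by Theorem~\ref{th:JS}.

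I begin with any admissible $(\Omega_1,u_1)$ with four vertices, producing after conjugation and doubling a catenoid-type annular minimal bigraph with two ends. For the inductive step, given $(\Omega_n,u_n)$ and the end of $M_n$ at which the simple exhaustion of $\Sigma$ attaches the next pair of pants, I select the corresponding ideal vertex $P_n\in{\cal V}_\infty(\Omega_n)$, choose $R_n$ large enough to contain every interior vertex of $\Omega_n$ (with $R_n\to+\infty$), a small $\delta_n\to 0$, and $\varepsilon_n=2^{-n}$, and apply Lemma~\ref{lem:main}. This yields $(\Omega_{n+1},u_{n+1})$ in which $P_n$ has been replaced by two new ideal vertices $P_{n,1},P_{n,2}$ together with one new interior vertex $P_{n,0}$, which is precisely the combinatorial move that splits the end of $M_n$ at $P_n$ into two ends joined by a pair of pants, so $M_{n+1}\cong\Sigma_{n+1}$.

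With $\sum_n\varepsilon_n<\infty$, condition~(4) of Lemma~\ref{lem:main} makes $\{u_n\}$ Cauchy in $C^k_{\mathrm{loc}}$ on the convex limit domain $\Omega_\infty:=\bigcup_n\Omega_n$, so $u_n\to u_\infty$ with $u_\infty$ a solution of \eqref{eq.min.surf} on $\Omega_\infty$ exhibiting the expected $\pm\infty$ boundary values. Since conjugation commutes with $C^k_{\mathrm{loc}}$ convergence and Theorem~\ref{th:krust} applies at every finite stage, the graphs $\Sigma_n^*$ converge to a minimal graph $\Sigma_\infty^*$ over an ideal domain $\Omega_\infty^*$ whose boundary alternates between infinitely many geodesics $\gamma_i^*$ (with $u_\infty^*=+\infty$) and horizontal symmetry curves $\delta_i^*$ on $\{t=0\}$, as described by Theorem~\ref{th:JS}. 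Schwarz reflection across this slice produces $f(\Sigma)\subset\mathbb{H}^2\times\mathbb{R}$: it is properly embedded by the bigraph structure and homeomorphic to $\Sigma$ by the matching of exhaustions, and properties~(1) and~(2) are immediate from the construction and the asymptotic behavior of the conjugate graph near each $\gamma_i^*$.

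For property~(3), the remark after Definition~\ref{Def:JS} lets me shrink the horocycles at the newly created ideal vertices at each inductive step, and Lemma~\ref{lem:main} lets me place $P_{n,1},P_{n,2}$ inside a prescribed small neighborhood of $P_n$ in $\partial_\infty\mathbb{H}^2$; tracking these choices through the conjugation confines the limit set of the end of $f(\Sigma)$ associated with an ideal vertex $q\in{\cal V}_\infty(\Omega_\infty)$ to an arbitrarily small neighborhood of $\{q\}\times\mathbb{R}$ in $\partial_\infty(\mathbb{H}^2\times\mathbb{R})$, after which a diagonal argument separates the limit sets of distinct ends. The step I expect to be most delicate is the inductive bookkeeping for Lemma~\ref{lem:main}: one must preserve admissibility, which is a global condition on all horocycles, while simultaneously (i) matching the combinatorial tree of end-splittings in the simple exhaustion of $\Sigma$ so that the resulting end space is exactly that of $\Sigma$ rather than some other planar domain, (ii) ruling out divergence lines in the limit so that the $C^k_{\mathrm{loc}}$ convergence genuinely passes to the conjugate graphs across infinitely many boundary components, and (iii) coordinating horocycle sizes so that the disjointness of limit sets survives the passage to the limit.
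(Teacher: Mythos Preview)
Your overall strategy---iterated application of Lemma~\ref{lem:main} to match the simple exhaustion, Cauchy convergence of the $u_n$, conjugation via Theorem~\ref{th:krust}, then doubling---is exactly the paper's approach. But two genuine gaps remain, and your list of ``delicate steps'' does not capture either of them.

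\textbf{Height synchronization of the symmetry curves.} For each finite $n$, Theorem~\ref{th:JS} places all the horizontal curves $\delta_i^*$ of the conjugate of $(\Omega_n,u_n)$ at a common height, which you may normalize to $0$. But you are not doubling the $G_n^*$; you conjugate the \emph{limit} graph $u_\infty$ and then double. Nothing in your construction forces the infinitely many horizontal curvature lines $\Gamma_q^*$ (one for each interior vertex $q$ of $\Omega_\infty$) to lie in a single slice $\{t=c\}$; a priori each could sit at its own height, and then a single Schwarz reflection is impossible. The paper handles this by imposing an extra inductive condition---their item~(g)---controlling the third coordinate of conjugate points near each interior vertex to be below $1/n$, and then proving separately (Claim~3.4) that every $\Gamma_q^*$ lands in $\{t=0\}$. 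You also need a barrier argument (the paper's Claim~3.3) to show $u_\infty$ genuinely takes the values $\pm\infty$ on the limit edges; smooth convergence on compacta of $\Omega_\infty$ does not give this for free.

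\textbf{The argument for property~(3) is based on a misconception.} You write that the limit set of an end ``associated with an ideal vertex $q\in{\cal V}_\infty(\Omega_\infty)$'' can be confined near $\{q\}\times\mathbb{R}$. But $q$ is an ideal vertex of the \emph{original} domain $\Omega_\infty$, while $f(\Sigma)$ is a bigraph over the \emph{conjugate} domain $\Omega_\infty^*$; conjugation is not an ambient isometry and carries no information about where in $\partial_\infty\mathbb{H}^2$ the conjugate ends land. Shrinking horocycles at $P_{n,1},P_{n,2}$ controls the geometry of $\Omega_\infty$, not of $\Omega_\infty^*$. The paper's argument for~(3) is entirely different: it shows, via Gauss--Bonnet and the rotation bound on the normal along a vertical line, that each concave boundary arc $\Gamma_q^*\subset\partial\Omega_\infty^*$ has two \emph{distinct} endpoints in $\partial_\infty\mathbb{H}^2$; then, given two ends separated at stage $n$, the two curves $\Gamma_{q_1}^*,\Gamma_{q_2}^*$ corresponding to the relevant interior vertices bound (together with two geodesics) an ideal quadrilateral whose complement separates the limit sets.

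A smaller point: the $\Omega_n$ are not nested (near the ideal vertex being split, $\Omega_{n+1}$ cuts into $\Omega_n$), so $\Omega_\infty=\bigcup_n\Omega_n$ is not the right object; the paper takes $\Omega=\bigcup_n\bigl(\Omega_n\cap B(R_{n+1})\bigr)$, which is an increasing union and convex.
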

\begin{proof}
  In what follows, we are going to assume that $\Sigma$ has an
  infinite number of ends. Otherwise, we refer to \cite{moro1} .  From
  Corollary~\ref{co:simple}, the domain $\Sigma$ admits a simple
  exhaustion $ \{ \Sigma_1 \subset \Sigma_2 \subset \cdots \subset
  \Sigma_n \subset \cdots \} .$ We are going to give a labeling of the
  boundary components of the simple exhaustion that will give us a
  description of the set of ends of $\Sigma.$

  The boundary components of $\Sigma_1$ will be denoted by
  $\partial_0$ and $\partial_1$.  The difference $\Sigma_2 \setminus
  \Sigma_1$ consists of a pair of pants $P_2$ and a cylinder $C_2$.
  If the cylinder has $\partial_i$ as a common boundary with
  $\Sigma_1$ then we denote as $\partial_{i,0}$ to the other boundary
  component of $C_2$. On the other hand, if $\partial_j$ is the
  boundary component of $P_2$ that touches $\Sigma_1$, then we label
  $\partial_{j,0}$ and $\partial_{j,1}$ as the other two boundary
  components of $P_2$.

  Now, assume we have already labeled the boundary components of
  $\Sigma_n$. We are going to label the connected components of
  $\partial \Sigma_{n+1}$. We know that $\Sigma_{n+1} \setminus
  \Sigma_n$ consists of cylinders $C_{n+1}^1, \ldots,C_{n+1}^k$ and
  just one pair of pants $P_{n+1}$.  For a cylinder $C_{n+1}^i$, if
  the boundary component of $C_{n+1}^i$ which touches $\Sigma_n$ is
  labeled as $\partial_{i_1, \ldots,i_n}$, then we represent by
  $\partial_{i_1, \ldots,i_n,0}$ the other boundary component. In the
  case of the pair of pants $P_{n+1}$, if the boundary component of
  $P_{n+1}$ which touches $\Sigma_n$ is labeled as $\partial_{j_1,
    \ldots,j_n}$, then we denote by $\partial_{j_1, \ldots,j_n,0}$ and
  $\partial_{j_1, \ldots,j_n,1}$ the other two connected components of
  $\partial P_{n+1}$.

  At this point, we are going to construct a sequence of admissible
  pairs $(\Omega_n,u_n)$, where $\Omega_n$ is an admissible domain
  with $2(n+1)$ edges, and a sequence of radius $\{ R_n \}_{n \geq 2}$
  and positive constants $\{\varepsilon_n\}_{n \geq 2}$,
  $\{\delta_n\}_{n \geq 2}$, satisfying:

  \begin{enumerate}[(a)]
  \item $\varepsilon_n,\delta_n\in(0,1/2^n)$. In particular, $\displaystyle
    \sum_{n \geq 2} \varepsilon_n < +\infty$, and $\displaystyle
    \sum_{n \geq 2} \delta_n < +\infty$.
  \item $\Omega_{n+1}$ contains all the vertices of $\Omega_n$, except
    for and ideal vertex $p$.
  \item $\Omega_{n+1}$ only contains two ideal vertices and an
    interior vertex which are not vertices of $\Omega_n$.  In
    particular, each boundary edge of $\Omega_{n}$ that does not
    contain $p$, is contained in $\partial\Omega_{n+1}.$
  \item $\Omega_n \cap B(R_{n+1}) \subset \Omega_{n+1} \cap
    B(R_{n+1}).$
  \item For any $k \in\mathbb{N}$, we have $\|u_{n+1}-u_n\|_k
    <\varepsilon_{n+1}$ in the domain
    $\Delta_n\stackrel{\rm def}{=}\Omega_n(\delta_{n+1}) \cap B(R_{n+1}) $.  We recall
    that $\Omega_n(\delta_{n+1})= \{p \in \Omega_n\ |\
    \mbox{\rm dist}_{\mathbb{H}^2}(p, \partial \Omega_n)>\delta_{n+1}\}.$
  \item If $G_n$ denotes the graph of $u_n$, then the surface $S_n$
    obtained by doubling the conjugate graph $G_n^*$ has the same
    topological type as $\Sigma_n$.

  \item If $q_{i_1,\ldots,i_n}$ is an interior vertex of $\Omega_n $
    and $x_{i_1,\ldots,i_n}$ is a point in $\partial
    \Omega_n(\delta_{n+1})$ with $$\mbox{\rm dist}_{\mathbb{H}^2}(q_{i_1,\ldots,i_n},
    x_{i_1,\ldots,i_n})=\delta_{n+1},$$ then the third coordinate of
    $(x_{i_1,\ldots,i_n},u(x_{i_1,\ldots,i_n}))^*$ is less than $1/n$,
    where $(x_{i_1,\ldots,i_n},u(x_{i_1,\ldots,i_n}))^*$ means the
    conjugate point in the conjugate graph $G_n^*$ corresponding to
    $(x_{i_1,\ldots,i_n},u(x_{i_1,\ldots,i_n}))$.

  \end{enumerate}
  The existence of such a sequence is obtained by using Lemma
  \ref{lem:main} in a recursive way: First, we take $(\Omega_1,u_1)$
  as an admissible pair, where $\Omega_1$ is an admissible geodesic
  quadrilateral.  We call $q_0,p_0,q_1,p_1$ the vertices of
  $\Omega_1$, with $p_0,p_1\in\partial_\infty\mathbb{H}^2$.  For the sake of
  clarity, we are going to construct the admissible domain $\Omega_2$.
  Take $R_2>0$ such that $B(R_2)$ contains $q_0,q_1$, and
  $\varepsilon_2\in(0,1/4)$.  We choose $\delta_2\in(0,1/4)$ small enough so
  that $\Omega_1(\delta_2)\cap\partial B(R_2)$ has two components.
  According to the notation we have introduced for the exhaustion
  $\Sigma_n$, $n \in\mathbb{N}$, we should add an interior vertex and two new
  ideal vertices around $p_j$: We apply Lemma~\ref{lem:main} to
  $\Omega_1, \varepsilon_2,\delta_2,R_2$ and $p_j$. We call them $q_{j,1}$ and
  $p_{j,0},p_{j,1}$, respectively.  The remain vertices $q_i,p_i,q_j$
  of $\Omega_1$ remains fixed, and we call them
  $q_{i,0},p_{i,0},q_{j,0}$.  The vertices of $\Omega_2$ are then
  $q_{i,0},p_{i,0},q_{j,0},p_{j,0},q_{j,1},p_{j,1}$, consecutively
  ordered.  Note that this action has {\em the topological effect of
    adding a pair of pants} to the surface obtained by doubling the
  conjugate graph. In order to see this, we call $\Gamma_{q_i}\stackrel{\rm def}{=} \{
  q_i\} \times\mathbb{R}$, $i=0,1$, the vertical lines contained in the graph
  of $u_1$, denoted by $G_1$.  Let $\Gamma^*_{q_0}$ and
  $\Gamma^*_{q_1}$ be the conjugate curves in $G^*_1$. By Theorem
  \ref{th:JS}, $\Gamma^*_{q_0}$ and $\Gamma^*_{q_1}$ are horizontal
  lines of symmetry placed at height zero.  Since $\Omega_1$ is
  convex, then we know by Theorem~\ref{th:krust} that $G_1^*$ is a
  vertical graph over a domain that we call $\Omega_1^*$.  Similarly,
  we denote by $\gamma^*_{p_i}$, $i=0,1$, the geodesics in $\partial
  \Omega^*_1$ given by Theorem \ref{th:JS}, where
  $u_1^*|_{\gamma^*_{p_i}}=+\infty$. When we reflect $G_1*$ with
  respect to the slice $\{t=0\}$ and obtain a properly embedded
  minimal surface $S_1$ with genus zero and two ends. The ends are
  asymptotic to the vertical geodesic planes $\gamma^*_{p_i} \times
  \mathbb{R}$. In this sense, we could say that there exists a natural
  correspondence between the ends of $S_1$ and the ideal vertices of
  $\Omega_1$, $p_0$ and $p_1$.  After the application of Lemma
  \ref{lem:main}, we are substituting the end associated to $p_j$ by
  two new ends; the ones associated to $p_{j,0}$ and $p_{j,1}$,
  respectively. These two new ends are linked by the horizontal curve
  of symmetry $\Gamma_{q_{j,1}}^*$ (see Figure \ref{fig:omega1}.)
  \vskip 3mm

\begin{figure}[htbp]
   \begin{center}
      \includegraphics[width=0.4\textwidth]{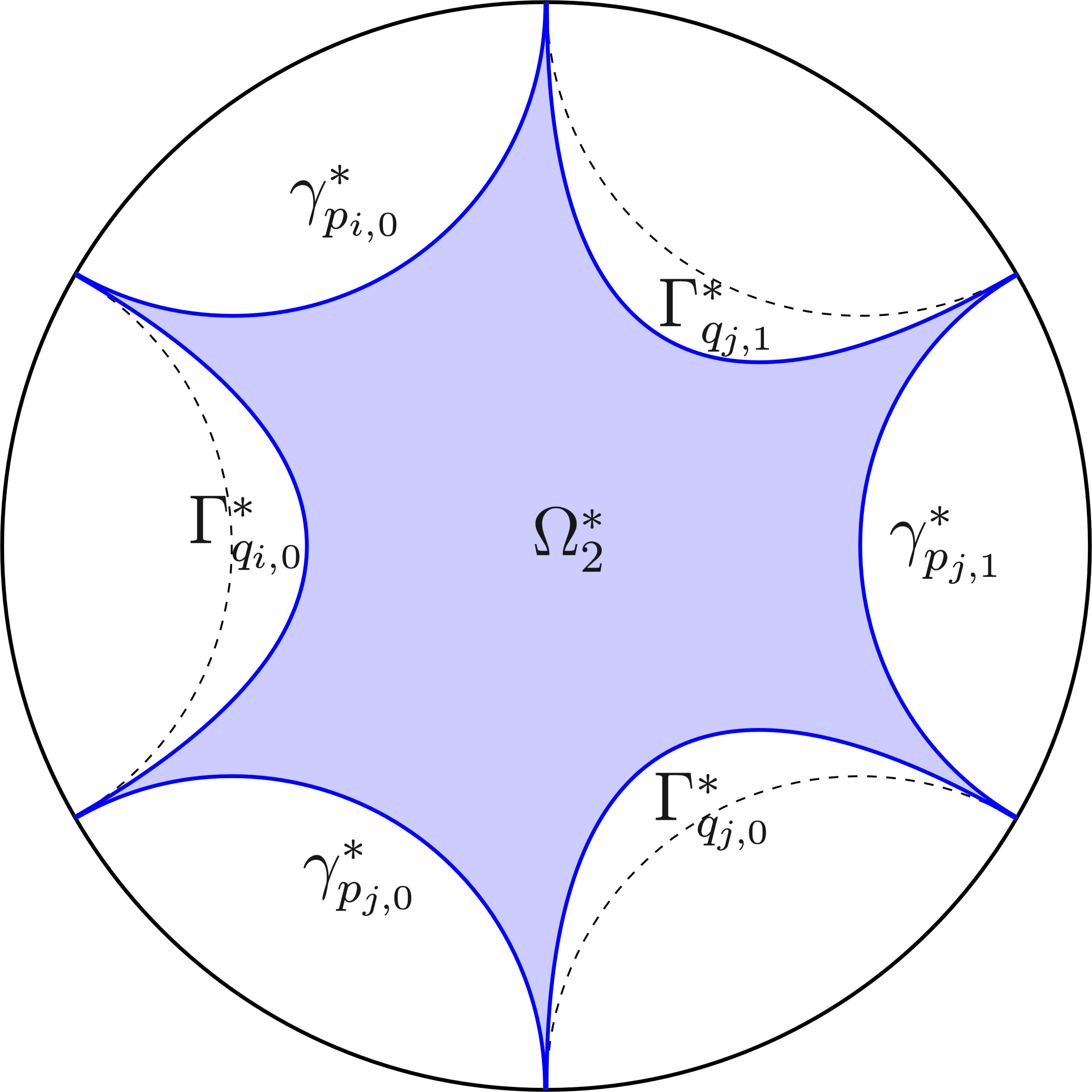}
   \end{center}
   \caption{The domain $\Omega_2^*$.}\label{fig:omega1}
 \end{figure}

  Now, assume we have $(\Omega_n,u_n)$ satisfying conditions above,
  and let us construct $(\Omega_{n+1},u_{n+1})$.  We fix $R_{n+1}>0$
  such that $B(R_{n+1})$ contains all the interior vertices of
  $\Omega_n$.  We choose $\delta_{n+1}\in(0,1/2^{n+1})$ small enough
  so that $\Omega_n(\delta_{n+1})\cap\partial B(R_{n+1})$ has $n+1$
  components.  We also take $\varepsilon_{n+1}\in(0,1/2^{n+1})$. As above, the
  effect of adding a pair of pants to the boundary $\partial_{j_1,
    \ldots,j_n}$ of $\Sigma_n$ means that we have to substitute the
  ideal vertex $p_ {j_1, \ldots,j_n}$ by two new ideal vertices, that
  we will call $p_ {j_1, \ldots,j_n,0}$ and $p_ {j_1,
    \ldots,j_n,1}$. To do this we apply, as before,
  Lemma~\ref{lem:main} to: $\Omega_n, \varepsilon_{n+1},\delta_{n+1},R_{n+1}$
  and $p_ {j_1, \ldots,j_n}$. A new interior vertex also appears, we
  call it $q_ {j_1, \ldots,j_n,1}$. Finally, we relabel the other
  vertices just by adding a $0$ in the subindex.

  Let us define $\displaystyle \Omega \stackrel{\rm def}{=}\bigcup_{n=1}^\infty
  \Delta_n$. It is not hard to prove that $\displaystyle \Omega=
  \bigcup_{n=1}^\infty \left( \Omega_n \cap B(R_{n+1}) \right)$ and
 $\Omega$ is {\it convex.}

 Taking into account that the sequence $\{u_n\}_{n \in\mathbb{N}}$ satisfies
 item (e) and that $\sum_n \varepsilon_n$ converges, then we obtain
 that $\{u_n\}_{n \in\mathbb{N}}$ is a Cauchy sequence, with respect to the
 smooth convergence on compact sets in $\Omega$. Ascoli-Arcela's
 theorem implies that $\{u_n\}_{n \in\mathbb{N}}$ converges to a smooth
 function $u$ which is also a solution of (\ref{eq.min.surf}) on
 $\Omega$. Label the graph surface of $u$ as $G$.  As $\Omega$ is
 convex, then Theorem \ref{th:krust} says us that $G^*$ is a also a
 graph over a domain that we call $\Omega^*$. In particular, $G^*$ is
 embedded.

 \begin{claim} \label{claim:graph} The limit graph $G$ contains
   vertical straight lines placed over the interior vertices of
   $\Omega_n$, for all $n\in\mathbb{N}$.
 \end{claim}
 In order to prove this claim, we fix $n_0 \in\mathbb{N}$ and let $q$ be a
 (fixed) interior vertex of $\Omega_{n_0}$. Two geodesics in $\partial
 \Omega_{n_0}$ arrive at this point, denoted by $\gamma_{n_0}^+$ and
 $\gamma_{n_0}^-$, with the properties that ${u_{n_0}}_{|
   \gamma_{n_0}^\pm}=\pm \infty.$ Recall that $q$ is an interior
 vertex of $\Omega_n$, for all $n\geq n_0$. Consider the corresponding
 boundary geodesics $\gamma_{n}^+,\gamma_{n}^-$ in $\partial \Omega_n$
 with ${u_{n}}_{| \gamma_{n}^\pm}=\pm \infty.$

 First, we focus on the sequence $\{\gamma_n^+\}_{n\in\mathbb{N}}$. Notice
 that, from the way in which we have obtained our sequence
 $\{\Omega_n\}_{n \in\mathbb{N}}$, the initial conditions of the geodesic
 $\gamma_n^+$ are given by $\gamma_n^+(0)=q,$ $(\gamma_n^+)'(0)={\rm
   e}^{{\rm i} \theta_n},$ where the sequence of arguments
 $\{\theta_n\}_{n \in\mathbb{N}}$ is monotone and bounded. So,
 $\{\theta_n\}_{n \in\mathbb{N}}$ converges to a real number $\theta$. Let
 $\gamma^+$ be the geodesic starting at $q$ with $(\gamma^+)'(0)={\rm
   e}^{{\rm i} \theta}.$ By construction, $\{\gamma_n^+\}_{n \in\mathbb{N}}$
 smoothly converges to $\gamma^+$. The geodesic $\gamma^+$ joins $q$
 with a point $p^+ \in \partial_\infty \mathbb{H}^2$. Moreover, $\gamma^+$ is
 part of $\partial \Omega$. Let $\rho^+$ be the radial geodesic
 arriving at $p^+$.  Taking our method of construction into account,
 we can guarantee that there are no interior vertices of $\Omega_n$,
 $n\geq n_0$, in the triangle $R^+$ whose sides consists of
 $\gamma^+$, a bounded piece of $\gamma_{n_0}^-$ starting at $q$ that
 we call $\sigma$ and a convex curve $\alpha$ (convex with respect to
 $R^+$) which is asymptotic to $\rho^+$ at $p^+$ (see Figure
 \ref{fig:r+}.) Let $v$ be the solution to the Dirichlet problem
 associated to equation \eqref{eq.min.surf} on $R^+$ with boundary
 data $+\infty$ on $\gamma^+$, $-\infty$ on $\sigma$ and
 $\mbox{inf}_{n \geq n_0} u_n$ on $\alpha$. Notice that $\mbox{inf}_{n
   \geq n_0} u_n$ is continuous over $\alpha$ and then solution $v$
 exists by Theorem 4.9 in~\cite{marr1}. Then, the generalized maximum
 principle given by Collin and Rosenberg in~\cite[Theorem 2]{cor2}
 (see also~\cite[Theorems 4.13 and 4.16]{marr1}) gives us that $v \leq
 u_n$ in $\Omega_n \cap R^+$, for all $n \geq n_0$.  This fact implies
 that $u_{|\gamma^+} =+\infty.$

\begin{figure}[htbp]
   \begin{center}
      \includegraphics[width=0.5\textwidth]{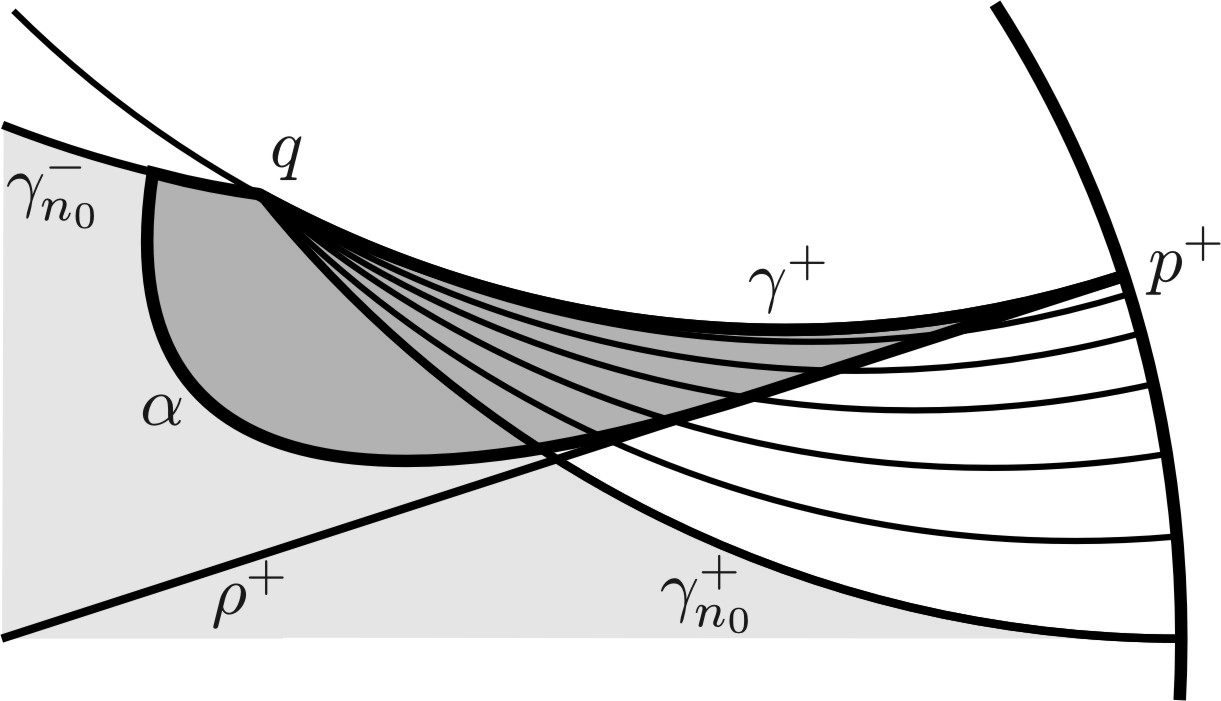}
   \end{center}
   \caption{}\label{fig:r+}
   \label{fig:barrera_recortado}
\end{figure}

A similar argument gives us that $u_{|\gamma^-} =-\infty,$ where
$\gamma^-$ is the limit of the sequence $\{\gamma_{n}^-\}.$ So, the
graph of $u$ extends to a vertical line over the point $q$.  This
concludes the proof of Claim \ref{claim:graph}.

Let $q$ be an interior vertex of $\Omega$ and $\Gamma_q
\stackrel{\rm def}{=} \{q \} \times\mathbb{R}$ the vertical line
contained in the graph of $u$, called $M$. Then, the conjugate curve
$\Gamma_q^*\subset M^*$ is a horizontal curvature line of symmetry
(see Subsection \ref{subsec:conj}.)

\begin{claim}
  \label{claim:altura}
  For any interior vertex $q$ in $\Omega$, $\Gamma_q^*$ is contained
  in the plane $\{t=0\}$. In particular, we can see $\Gamma_q^*$ as a
  part of $\partial \Omega^*$. In this sense, $\Gamma_q^*$ is concave
  with respect to $\Omega^*$. Moreover, the endpoints of $\Gamma_q^*$
  in $\partial_\infty \mathbb{H}^2$ are distinct.
\end{claim}

In order to prove this claim, we assume that $q$ is an interior vertex
of $\Omega_n$, for $n \geq k$.  As a vertex of $\Omega_n$, $q$ appears
represented as $q_{i_0, \ldots,i_n}$, with $i_j \in \{0,1\}$, $j=1,
\ldots,n.$ Let $x_{i_0, \ldots,i_n}$ be the corresponding point given
by item (g). By construction, the sequence $\{x_{i_0, \ldots,i_n}\}_{n
  \in\mathbb{N}}$ converges to $q$. So, $\{(x_{i_0, \ldots,i_n}, u(x_{i_0,
  \ldots,i_n}))^*\}_{n \in\mathbb{N}}$ is a sequence of points in $\mathbb{H}^2\times
\mathbb{R}$ accumulating to $\Gamma_q^*$. Taking item (g) into
account (and using that the intrinsic distance between two vertical
geodesics in the boundary of the graphs $G_n$ remains uniformly
bounded), this means that $\Gamma_q^*$ is contained in the slice
$\{t=0\}$, for any $q$. The concavity of $\Gamma_q^*$ with respect
to $\Omega^*$ is a simple consequence of the maximum principle for
minimal surfaces, using that $\Gamma_q^*$ is a curve of symmetry.

Now, we are going to see that the endpoints of $\Gamma_q^*$ are
distinct. We proceed by contradiction. We suppose that both branches
of $\Gamma_q^*$ arrive to the same ideal point $d
\in\partial_\infty\mathbb{H}^2$, and let $\sigma_\varepsilon$ be the
geodesic in $\mathbb{H}^2$ whose endpoints $d_\varepsilon^\pm$ are
disposed symmetrically in $\partial_\infty\mathbb{H}^2$ with respect
to $d$ and such that $\mbox{\rm
dist}_{\mathbb{R}^2}(d,d_\varepsilon^\pm)=\varepsilon$. We consider
the bounded convex region ${\cal D}$ in $\mathbb{H}^2$ bounded by
$\Gamma_q^*$ and $\sigma_\varepsilon$. If we apply Gauss-Bonnet
formula for $\varepsilon$ small enough, we obtain that
\begin{equation}\label{eq:kg}
  \mbox{Area}({\cal D})\leq \int_{\Gamma_q^*} k_g -\pi,
\end{equation}
where $k_g$ is the geodesic curvature of $\Gamma_q^*$ in
$\mathbb{H}^2$. Since the normal vector field of $M$ rotates less
than $\pi$ along $\Gamma_q$, we get $\int_{\Gamma_q^*} k_g\leq\pi$,
which contradicts~\eqref{eq:kg}.

We consider the closed set $D_n=\overline{ \Omega'_n} \cap \overline{
  B(R_{n+1})}$, where $\Omega_n'$ is a domain in $\Omega_n$ with the
same vertices than $\Omega_n$ joined by arcs which are contained in
$\Omega_n \setminus \Omega_n(\delta_n)$.  Denote by $M_n$ the graph
of $u$ over $D_n.$ $M_n$ is a minimal surface whose boundary
contains vertical segments over the interior vertices of $\Omega_n$.
Then the conjugate surface $M_n^*$ can be reflected with respect to
the horizontal slice $\mathbb{H}^2\times\{0\}$, and we obtain a
surface $S_n$ which is homeomorphic to $\Sigma_n$. Furthermore, if
we label $f_n : \Sigma_n \to S_n$ to this homeomorphism, we have for
all $i\leq n$ that ${f_n}|_{\Sigma_i}$ coincides with the
corresponding homeomorphism $f_i:\Sigma_i \to S_i$, since
$D_i\subset D_n$.

Let $S$ be complete surface obtained by gluing together both $G^*$
and its reflection with respect to $\mathbb{H}^2\times\{0\}$. We
have that $S_n$ is a simple exhaustion of $S$ and the sequence of
homeomorphisms $\{f_n \}_{n \in\mathbb{N}}$ has a limit $f:\Sigma
\to S$. \vskip 3mm

In order to prove item (3) in the statement of the theorem, we
consider $E_1$ and $E_2$ two different ends of $f(\Sigma)$. Then
there is a first natural $n \in\mathbb{N}$ so that $E_1$ and $E_2$
are represented by two different components of $\Sigma-
\left(\cup_{i=1}^n \Sigma_i \right).$ This is $\partial_{i_1,
\ldots, i_n}$ is the boundary of a component representing both ends
$E_1$ and $E_2$, but $\partial_{i_1,
  \ldots,i_{n}, 0}$ represents $E_1$ and $\partial_{i_1, \ldots,i_{n},
  1}$ represents $E_2$. Consider the points $q_1=q_{i_1,
  \ldots,i_{n},0}$ and $q_2=q_{i_1, \ldots,i_{n},1}$ which are
interior vertices of $\Omega$.  From Claim \ref{claim:altura} we
know that $\Gamma_{q_1}^*$ and $\Gamma_{q_2}^*$ are curves in
$\partial \Omega^*$ with distinct endpoints. Moreover, these two
curves cannot be asymptotic. Let $\eta_1$ and $\eta_2$ be the
geodesics in $\mathbb{H}^2$ joining an end point of $\Gamma_{q_1}^*$
to an endpoint of $\Gamma_{q_2}^*$ in such a way that $\eta_1 \cup
\Gamma_{q_1}^* \cup \eta_2 \cup \Gamma_{q_2}^*$ bounds an open ideal
quadrilateral ${\cal
  Q}$. Hence, the limit sets $L(E_1)$ and $L(E_2)$ lie in different
components of $\partial_\infty ( (\mathbb{H}^2-{\cal Q})
\times\mathbb{R}).$
\end{proof}

Finally, we would like to discuss about the underlying conformal structure
of the minimal surfaces we have just constructed. A good reference for the
notation and results we are going to use is \cite[\S 6 and \S 15]{ahlfors}.

As we have mentioned before, it is important to note that if
$\Sigma$ has a finite number of ends, then the examples provided in
the above theorem are those already constructed by Morabito and the
second author. These examples have total curvature $-4 \pi(k-1)$,
where $k$ represents the number of ends. Thus, using a classical
result by Huber \cite{huber}, Morabito-Rodr\'\i guez's surfaces are
conformally equivalent to a sphere minus $k$ points.  In particular,
they are parabolic (see definition below).  The examples with
infinite topology given by Theorem \ref{th:main} no longer have
finite total curvature. However, we would like to point out that
they can be constructed with parabolic conformal type, as explained
in Remark~\ref{rem:parabolic}.

\begin{definition}
\label{def:parabolic}
An open Riemann surface $W$ is said to be parabolic if there are no
non-constant negative subharmonic functions on $W$.
\end{definition}

Among other important characterizations of parabolicity, we
know that $W$ is parabolic if and only if one of the following conditions is
fulfilled:
\begin{itemize}
\item the maximum principle for harmonic maps is valid on $W$;
\item the harmonic measure of the ideal boundary of $W$ vanishes;
\item there is no Green's function defined on $W$.
\end{itemize}

\begin{remark}\label{rem:parabolic}
  The embedding $f:\Sigma \to \mathbb{H}^2\times\mathbb{R}$ in Theorem \ref{th:main}
  can be constructed in such a way that $f(\Sigma)$ is parabolic.  To
  do this, we consider the simple exhaustion $$S_1 \subset S_2 \subset
  \cdots \subset S_n \subset \cdots$$ given in the proof of the
  theorem. We denote by $\lambda_n$ the {\em extremal length} between
  $\partial S_1$ and $\partial S_n$ and by $\mu_n$ the harmonic
  modulus $\mu_n \stackrel{\rm def}{=} {\rm e}^{\lambda_n}$.  Notice that the surface
  obtained by doubling the graph $G_n^*$ is parabolic (it has finite
  total curvature). So, using Lemma \ref{lem:main} in a suitable way,
  we could guarantee in our inductive process that $\mu_n \geq n-1$.
  This fact implies that $S=f(\Sigma)$ is parabolic.
\end{remark}

\bibliographystyle{plain}

\begin{thebibliography}{10}

\bibitem{ahlfors} {\em L.V. Ahlfors and L. Sario}, Riemann Surfaces. {\em Princeton University Press}, 1960.
\bibitem{cmCourant}
{\em T.~H. Colding and W.~P. Minicozzi~II},
\newblock {Minimal surfaces}, volume~4 of {Courant Lecture Notes in
  Mathematics},
\newblock New York University Courant Institute of Mathematical Sciences, New
  York (1999).

\bibitem{cm34}
{\em T.~H. Colding and W.~P. Minicozzi~II},
\newblock An excursion into geometric analysis,
\newblock in {Surveys of Differential Geometry IX - Eigenvalues of
  Laplacian and other geometric operators}, pages 83--146. International Press,
  edited by Alexander Grigor'yan and Shing Tung Yau (2004).

\bibitem{cor2}
{\em P.~Collin and H.~Rosenberg},
\newblock Construction of harmonic diffeomorphisms and minimal
graphs.
\newblock {Annals of Math.}, {\bf 172} (2010), 1879--1906.

\bibitem{da2}
{\em B.~Daniel},
\newblock Isometric immersions into {$\mathbb{S}^n\times\mathbb{R}$} and
  {$\mathbb{H}^n\times\mathbb{R}$} and applications to minimal
  surfaces.
\newblock {Trans. Amer. Math. Soc.}, {\bf 361} (2009), 6255--6282.

\bibitem{fmm}
{\em L. Ferrer, F. Mart\'\i n and W.~H. Meeks~III},
\newblock Existence of proper minimal surfaces of arbitrary
topological type.
\newblock Preprint.


\bibitem{hau1}
{\em L.~Hauswirth},
\newblock Minimal surfaces of riemann type in three-dimensional product
  manifolds.
\newblock {Pacific Journal of Math.}, {\bf 224} (2006), 91--117.

\bibitem{HST}
{\em L.~Hauswirth, R.~Sa Earp and E.~Toubiana},
\newblock Associate and conjugate minimal immersions $\mathbb{H}^2\times
  \mathbb{R}$.
\newblock {Tohoku Math. J.}, {\bf 60} (2008), 267--286.

\bibitem{huber} {\em A. Huber}, { On subharmonic functions and differential geometry
in the large.} Comment. Math. Helv., Vol. {\bf 32} (1957), 13-72.

\bibitem{JS}
{\em H. Jenkins and J. Serrin,}
 The Dirichlet problem for the minimal surface equation, with infinite data.
 Bull. Amer. Math. Soc. 72 1966 102-106.

\bibitem{marr1}
{\em L.~Mazet, M.M.~Rodr\'\i guez, and H.~Rosenberg},
\newblock The {D}irichlet problem for the minimal surface equation with
  possible infinite boundary data over domains in a Riemannian
  surface.
\newblock Proc. London Math. Soc., {\bf 102} (2011), 985-1023.

\bibitem{mpe3}
{\em W.~H. Meeks~III and J.~P\'{e}rez},
\newblock Embedded minimal surfaces of finite topology.
\newblock Preprint, available at {\tt
  http://www.ugr.es/local/jperez/papers/papers.htm}.

\bibitem{mpe1}
{\em W.~H. Meeks~III and J.~P\'{e}rez},
\newblock Conformal properties in classical minimal surface theory,
\newblock in {Surveys of Differential Geometry IX - Eigenvalues of
  Laplacian and other geometric operators}, pages 275--336. International
  Press, edited by Alexander Grigor'yan and Shing Tung Yau, 2004.

\bibitem{mpr6}
{\em W.~H. Meeks~III, J.~P\'{e}rez, and A.~Ros},
\newblock Properly embedded minimal planar domains,
\newblock preprint, available at {\tt
  http://www.ugr.es/local/jperez/papers/papers.htm}.

\bibitem{moro1}
{\em F.~Morabito and M.M.~Rodr\'\i guez},
\newblock Saddle towers and minimal $k$-noids in
  $\mathbb{H}^2\times\mathbb{R}$.
\newblock To appear in J. Inst. Math. Jussieu, arXiv:math/0910.5676.

\bibitem{ner2}
{\em B.~Nelli and H.~Rosenberg},
\newblock Minimal surfaces in $\mathbb{H}^2 \times \mathbb{R}$.
\newblock {Bull. Braz. Math. Soc.}, {\bf 33} (2002), 263--292.
\newblock MR1940353, Zbl 1038.53011.

\bibitem{pyo1}
{\em J.~Pyo},
\newblock New complete embedded minimal surfaces in
  $\mathbb{H}^2\times\mathbb{R}$.
\newblock Preprint, arXiv:math/0911.5577.

\bibitem{R}
{\em M.M.~Rodr\'\i guez},
\newblock Minimal surfaces with limit ends in $\mathbb{H}^2\times\mathbb{R}$.
\newblock Preprint, arXiv:math/1009.3524.

\end{thebibliography}

\mbox{}\\

\noindent
Francisco Mart\'\i n and M. Magdalena Rodr\'\i guez\\
Departamento de Geometr\'\i a y Topolog\'\i a\\
Universidad de Granada\\
Fuentenueva, 18071, Granada, Spain\\
e-mail: \texttt{fmartin@ugr.es, magdarp@ugr.es}

\end{document}